\begin{document}
\begin{CJK}{GBK}{song}

\newtheorem{theorem}{Theorem}
\newtheorem{lemma}{Lemma}
\newtheorem{definition}{Definition}
\newtheorem{remark}{Remark}
\newtheorem{corollery}{Corollery}
\newtheorem{example}{Example}

\title{\bf A Remark on the Localization formulas about two Killing vector fields}
\author{Xu Chen \footnote{{\it Email:} xiaorenwu08@163.com. ChongQing, China }}
\date{}
\maketitle

\begin{abstract}
In this article, we will discuss a localization formulas of equlvariant cohomology about two Killing vector fields on the set of zero points
${\rm{Zero}}(X_{M}-\sqrt{-1}Y_{M})=\{x\in M \mid |Y_{M}(x)|=|X_{M}(x)|=0 \}.$ As application, we use it to get formulas about characteristic numbers and to get a Duistermaat-Heckman type formula on symplectic manifold.

\end{abstract}

The localization theorem for equivariant differential forms was
obtained by Berline and Vergne(see [3]). They discuss on the zero
points of a Killing vector field, the localization formula expresses the integral of an equivariantly closed differential form as an integral over the set of zeros of the Killing vector field. The de Rham model for equivariant cohomology give a deeper understanding of equivariant differential forms(see [1]). In [6], we introduce the equlvariant cohomology about two Killing vector fields and to establish a localization formulas on the set of zero points
$${\rm{Zero}}(X_{M}+\sqrt{-1}Y_{M})=\{x\in M \mid \langle X_{M}(x), Y_{M}(x)\rangle=0, |Y_{M}(x)|=|X_{M}(x)|\}.$$
For gaining a deeper understanding of equlvariant cohomology about two Killing vector fields, we introduce the Cartan model for equlvariant cohomology about two Killing vector fields(see [7]).

In this article, we will to establish a localization formulas of equlvariant cohomology about two Killing vector fields on the set of zero points
$${\rm{Zero}}(X_{M}-\sqrt{-1}Y_{M})=\{x\in M \mid |Y_{M}(x)|=|X_{M}(x)|=0 \}.$$
We will see that the set of zero points ${\rm{Zero}}(X_{M}-\sqrt{-1}Y_{M})$ is smaller and more basic.
As application, we use the localization formulas to get formulas about characteristic numbers and to get a Duistermaat-Heckman type formula on symplectic manifold.

\section{Equlvariant cohomology by two Killing vector fields}
First, let us review the definition of equlvariant cohomology about two Killing vector fields.
Let $M$ be a smooth closed oriented manifold. Let $G$ be
a compact Lie group acting smoothly on $M$, and let $\mathfrak{g}$
be its Lie algebra. Let $g^{TM}$ be a $G$-invariant metric on $TM$.
Let $\Omega^{*}(M)$ be the space of smooth differetial forms on $M$, the
de Rham complex is $(\Omega^{*}(M),d)$.
Let $\Omega^{*}(M)\otimes_{\mathbb{R}}\mathbb{C}$ be the space of smooth complex-valued differetial forms on $M$.
If $X,Y\in\mathfrak{g}$, let $X_{M} ,Y_{M}$ be the corresponding
smooth vector field on $M$ given by
$$(X_{M}f)(x)=\frac{d}{dt}f(\exp(-tX)\cdot x)\mid_{t=0}.$$
If $X,Y\in\mathfrak{g}$, then $X_{M}
,Y_{M}$ are Killing vector field.
Let $L_{X_{M}}$ be the Lie
derivative of $X_{M}$ on $\Omega^{*}(M)$, $i_{X_{M}}$ be the
interior multiplication induced by the contraction of $X_{M}$.\par

Set
$$L_{X_{M}+\sqrt{-1}Y_{M}}\doteq L_{X_{M}}+\sqrt{-1}L_{Y_{M}}$$
be the operator on
$\Omega^{*}(M)\otimes_{\mathbb{R}}\mathbb{C}$.\par

Set
$$i_{X_{M}+\sqrt{-1}Y_{M}}\doteq i_{X_{M}}+\sqrt{-1}i_{Y_{M}}$$
be the interior multiplication induced by the contraction of
$X_{M}+\sqrt{-1}Y_{M}$. It is also a operator on
$\Omega^{*}(M)\otimes_{\mathbb{R}}\mathbb{C}$.\par

Set$$d_{X+\sqrt{-1}Y}=d+i_{X_{M}+\sqrt{-1}Y_{M}}.$$

So
$$d_{X+\sqrt{-1}Y}^{2}=L_{X_{M}}+\sqrt{-1}L_{Y_{M}}=L_{X_{M}+\sqrt{-1}Y_{M}}.$$

Let$$\Omega_{X_{M}+\sqrt{-1}Y_{M}}^{*}(M)=\{\omega\in\Omega^{*}(M)\otimes_{\mathbb{R}}\mathbb{C}:L_{X_{M}+\sqrt{-1}Y_{M}}\omega=0\}$$
be the space of smooth $(X_{M}+\sqrt{-1}Y_{M})$-invariant forms on
$M$. Then we get a complex $(\Omega_{X_{M}+\sqrt{-1}Y_{M}}^{*}(M),
d_{X+\sqrt{-1}Y})$. We call a form $\omega$ is
$d_{X+\sqrt{-1}Y}$-closed if $d_{X+\sqrt{-1}Y}\omega=0$. The corresponding cohomology
group
$$H^{*}_{X+\sqrt{-1}Y}(M)=\frac{{\rm{Ker}}d_{X+\sqrt{-1}Y}|_{\Omega_{X+\sqrt{-1}Y}^{*}(M)}}{{\rm{Im}}d_{X+\sqrt{-1}Y}|_{\Omega_{X+\sqrt{-1}Y}^{*-1}(M)}}$$
is called the equivariant cohomology associated with $X_{M}+\sqrt{-1}Y_{M}$. By the same way, we can define the equivariant cohomology about two vector fields (not Killing vector fields). We can see that, if we set $Y_{M}=0$, then we get the equivariant cohomology as normal.

For any $\omega\in\Omega^{*}(M)\otimes_{\mathbb{R}}\mathbb{C}$, we can write it by $\xi+\sqrt{-1}\eta$, where $\xi,\eta\in\Omega^{*}(M)$. By the definition of $d_{X+\sqrt{-1}Y}$-closed forms, we have $\omega=\xi+\sqrt{-1}\eta$ is $d_{X+\sqrt{-1}Y}$-closed if and only if $d\xi+i_{X_{M}}\xi-i_{Y_{M}}\eta=0$ and $d\eta+i_{X_{M}}\eta+i_{Y_{M}}\xi=0$. For a special case, we have the following result

\begin{lemma}
$\omega=\xi+\sqrt{-1}\eta\in\Omega^{*}(M)\otimes_{\mathbb{R}}\mathbb{C}$ with $\xi,\eta$ are $m$-forms, then $\omega$ is $d_{X+\sqrt{-1}Y}$-closed if and only if $d\xi=0,d\eta=0$ and $i_{X_{M}}\xi=i_{Y_{M}}\eta, \ i_{X_{M}}\eta=-i_{Y_{M}}\xi$.
\end{lemma}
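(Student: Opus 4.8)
The plan is to reduce everything to the componentwise criterion recorded in the paragraph just before the lemma, and then to split the resulting identities according to form-degree. Writing $\omega=\xi+\sqrt{-1}\eta$ with $\xi,\eta\in\Omega^{m}(M)$, that criterion says $\omega$ is $d_{X+\sqrt{-1}Y}$-closed if and only if
$$d\xi+i_{X_{M}}\xi-i_{Y_{M}}\eta=0,\qquad d\eta+i_{X_{M}}\eta+i_{Y_{M}}\xi=0.$$

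The key observation is that, because $\xi$ and $\eta$ are homogeneous of the common degree $m$, each of these two equations involves only two distinct degrees: $d$ raises degree by $1$, while $i_{X_{M}}$ and $i_{Y_{M}}$ lower it by $1$, so $d\xi$ lies in $\Omega^{m+1}(M)$ whereas $i_{X_{M}}\xi-i_{Y_{M}}\eta$ lies in $\Omega^{m-1}(M)$, and similarly for the second equation. A sum of a form of degree $m+1$ and a form of degree $m-1$ vanishes precisely when both summands vanish separately. Hence the first equation is equivalent to the pair $d\xi=0$ and $i_{X_{M}}\xi=i_{Y_{M}}\eta$, and the second to $d\eta=0$ and $i_{X_{M}}\eta=-i_{Y_{M}}\xi$. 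This yields the four asserted conditions, and conversely those four conditions visibly imply the two displayed equations, so the equivalence is complete. (For $m=0$ the interior-product terms vanish identically and the statement degenerates harmlessly to $d\xi=d\eta=0$.)

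I would close with the remark that no independent check of $(X_{M}+\sqrt{-1}Y_{M})$-invariance of $\omega$ is required, since $d_{X+\sqrt{-1}Y}^{2}=L_{X_{M}+\sqrt{-1}Y_{M}}$ forces every $d_{X+\sqrt{-1}Y}$-closed form to satisfy $L_{X_{M}+\sqrt{-1}Y_{M}}\omega=0$. I do not expect any genuine obstacle here: the entire content is the degree-splitting argument above, and the only thing to be careful about is the elementary bookkeeping of which operator shifts degree in which direction.
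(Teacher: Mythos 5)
Your proposal is correct and follows essentially the same route as the paper: split $d_{X+\sqrt{-1}Y}\omega=0$ into its real and imaginary components and then separate each resulting equation by form-degree, which is exactly what the paper does (you merely spell out the degree bookkeeping that the paper compresses into ``because $\xi,\eta$ have the same degree''). No changes needed.
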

\begin{proof}
For $\omega=\xi+\sqrt{-1}\eta$, by the definition of $d_{X+\sqrt{-1}Y}$-closed forms, we have
$$d\xi+i_{X_{M}}\xi-i_{Y_{M}}\eta=0,\ \ d\eta+i_{X_{M}}\eta+i_{Y_{M}}\xi=0,$$
and because $\xi,\eta$ are $m$-forms, they have the same degree, so
$d\xi=0,d\eta=0$ and $i_{X_{M}}\xi=i_{Y_{M}}\eta, \ i_{X_{M}}\eta=-i_{Y_{M}}\xi$.

If $d\xi=0,d\eta=0$ and $i_{X_{M}}\xi=i_{Y_{M}}\eta, \ i_{X_{M}}\eta=-i_{Y_{M}}\xi$; then we have
$$d\xi+i_{X_{M}}\xi-i_{Y_{M}}\eta=0,\ \ d\eta+i_{X_{M}}\eta+i_{Y_{M}}\xi=0,$$
so $\omega=\xi+\sqrt{-1}\eta$ is $d_{X+\sqrt{-1}Y}$-closed forms.
\end{proof}

The condition $i_{X_{M}}\xi=i_{Y_{M}}\eta, \ i_{X_{M}}\eta=-i_{Y_{M}}\xi$ looks like the Cauchy-Riemann condition about holomorhpic functions.
\begin{example}
If $f=u+\sqrt{-1}v$ is a holomorhpic functions on $\mathbb{C}$, by the Cauchy-Riemann condition one have
$$\frac{\partial u}{\partial x}=\frac{\partial v}{\partial y}, \ \frac{\partial u}{\partial y}=-\frac{\partial v}{\partial x}.$$
Set $M=\mathbb{C}$, let $X_{M}=\frac{\partial}{\partial x}, Y_{M}=\frac{\partial}{\partial y}$, so by the Cauchy-Riemann condition we have
$$i_{\frac{\partial}{\partial x}}du=i_{\frac{\partial}{\partial y}}dv, \  i_{\frac{\partial}{\partial x}}dv=-i_{\frac{\partial}{\partial y}}du$$
Then by Lemma 1., $df$ is a $d_{X+\sqrt{-1}Y}$-closed forms.
\end{example}

\section{Some special $d_{X+\sqrt{-1}Y}$-closed forms}
In this section, we will give four special $d_{X+\sqrt{-1}Y}$-closed forms, $d_{X+\sqrt{-1}Y}(X^{'}+\sqrt{-1}Y^{'})$, $d_{X+\sqrt{-1}Y}(Y^{'}-\sqrt{-1}X^{'})$, $d_{X+\sqrt{-1}Y}(X^{'}-\sqrt{-1}Y^{'})$ and $d_{X+\sqrt{-1}Y}(Y^{'}+\sqrt{-1}X^{'})$.
\begin{lemma}
If $X,Y\in\mathfrak{g}$, let $X_{M} ,Y_{M}$ be the corresponding
smooth vector field on $M$, $X^{'}, Y^{'}$ be the 1-form on $M$
which is dual to $X_{M} ,Y_{M}$ by the metric $g^{TM}$, then
$$L_{X_{M}}Y^{'}+L_{Y_{M}}X^{'}=0$$
\end{lemma}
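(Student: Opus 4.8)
The plan is to reduce the identity to the single fact that the Lie derivative along a Killing field commutes with raising and lowering indices by the metric, after which a sign cancellation finishes the job. First I would write out, for an arbitrary vector field $Z$ on $M$, the standard formula for the Lie derivative of the $1$-form $Y'$:
$$(L_{X_{M}}Y')(Z)=X_{M}\big(Y'(Z)\big)-Y'\big([X_{M},Z]\big)=X_{M}\big(g^{TM}(Y_{M},Z)\big)-g^{TM}\big(Y_{M},[X_{M},Z]\big),$$
where the second equality uses that $Y'$ is metric-dual to $Y_{M}$.

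Next I would invoke that $X_{M}$ is Killing, i.e. $L_{X_{M}}g^{TM}=0$, which written out says
$$X_{M}\big(g^{TM}(A,B)\big)=g^{TM}\big([X_{M},A],B\big)+g^{TM}\big(A,[X_{M},B]\big)$$
for all vector fields $A,B$. Taking $A=Y_{M}$, $B=Z$ and substituting into the previous display, the two copies of $g^{TM}(Y_{M},[X_{M},Z])$ cancel, leaving
$$(L_{X_{M}}Y')(Z)=g^{TM}\big([X_{M},Y_{M}],Z\big),$$
that is, $L_{X_{M}}Y'$ is exactly the $1$-form dual to $[X_{M},Y_{M}]$. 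Running the identical computation with the roles of $X$ and $Y$ interchanged (and using that $Y_{M}$ is Killing as well) gives $L_{Y_{M}}X'$ equal to the $1$-form dual to $[Y_{M},X_{M}]=-[X_{M},Y_{M}]$. Adding the two, the duals of $[X_{M},Y_{M}]$ and $-[X_{M},Y_{M}]$ cancel, so $L_{X_{M}}Y'+L_{Y_{M}}X'=0$.

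I do not expect a real obstacle; the only thing to watch is the sign bookkeeping. In particular the convention $(X_{M}f)(x)=\frac{d}{dt}f(\exp(-tX)\cdot x)|_{t=0}$ determines whether $X\mapsto X_{M}$ is a Lie-algebra homomorphism or anti-homomorphism, but since the two terms above contribute the dual of $[X_{M},Y_{M}]$ with opposite signs, the conclusion is insensitive to this choice. If one prefers to stay inside the Cartan calculus, an alternative is to use Cartan's formula $L_{X_{M}}Y'=d\,i_{X_{M}}Y'+i_{X_{M}}dY'$ together with $i_{X_{M}}Y'=g^{TM}(X_{M},Y_{M})=i_{Y_{M}}X'$, and then express $i_{X_{M}}dY'$ and $i_{Y_{M}}dX'$ through the Levi-Civita connection using the skew-symmetry of $\nabla X_{M}$ and $\nabla Y_{M}$ (the infinitesimal Killing condition); the resulting terms are readily seen to cancel $2\,d\big(g^{TM}(X_{M},Y_{M})\big)$. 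This works but is longer than the direct argument, so I would present the first route.
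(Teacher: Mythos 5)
Your proof is correct and follows essentially the same route as the paper: expand $(L_{X_M}Y')(Z)$ via the standard Lie-derivative formula, use the Killing condition $L_{X_M}g^{TM}=0$ to rewrite $X_M\langle Y_M,Z\rangle$, and observe that the two terms dual to $[X_M,Y_M]$ and $[Y_M,X_M]$ cancel upon adding. The only difference is presentational (you isolate the intermediate identity $L_{X_M}Y'=([X_M,Y_M])'$ explicitly), which does not change the argument.
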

\begin{proof}
Because
$$(L_{X_{M}}\omega)(Z)=X_{M}(\omega(Z))-\omega([X_{M},Z])$$
here $Z\in\Gamma(TM)$, So we get
$$(L_{X_{M}}Y^{'})(Z)=X_{M}\langle Y_{M},Z\rangle-\langle[X_{M},Z],Y_{M}\rangle$$
$$(L_{Y_{M}}X^{'})(Z)=Y_{M}\langle X_{M},Z\rangle-\langle[Y_{M},Z],X_{M}\rangle.$$
Because $X_{M},Y_{M}$ are Killing vector fields, so(see [11])
\begin{align*}
X_{M}\langle Y_{M},Z\rangle
&=\langle L_{X_{M}}Y_{M},Z\rangle+\langle Y_{M},L_{X_{M}}Z\rangle\\
&=\langle[X_{M},Y_{M}],Z\rangle+\langle Y_{M},[X_{M},Z]\rangle
\end{align*}
\begin{align*}
Y_{M}\langle X_{M},Z\rangle
&=\langle L_{Y_{M}}X_{M},Z\rangle+\langle X_{M},L_{Y_{M}}Z\rangle\\
&=\langle[Y_{M},X_{M}],Z\rangle+\langle X_{M},[Y_{M},Z]\rangle
\end{align*}
then we get
$$(L_{X_{M}}Y^{'}+L_{Y_{M}}X^{'})(Z)=\langle[X_{M},Y_{M}],Z\rangle+\langle[Y_{M},X_{M}],Z\rangle=0$$
\end{proof}

\begin{lemma}
If $X,Y\in\mathfrak{g}$, let $X_{M} ,Y_{M}$ be the corresponding
smooth vector field on $M$, $X^{'}, Y^{'}$ be the 1-form on $M$
which is dual to $X_{M} ,Y_{M}$ by the metric $g^{TM}$, then
\begin{description}
\item[1)] $d_{X+\sqrt{-1}Y}(X^{'}+\sqrt{-1}Y^{'})$
\item[2)] $d_{X+\sqrt{-1}Y}(Y^{'}-\sqrt{-1}X^{'})$
\end{description}
are the $d_{X+\sqrt{-1}Y}$-closed forms.
\end{lemma}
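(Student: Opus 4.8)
The plan is to show that each of the two forms is $d_{X+\sqrt{-1}Y}$-closed by the standard argument: a form of the shape $d_{X+\sqrt{-1}Y}\alpha$ is automatically in the kernel of $d_{X+\sqrt{-1}Y}$ as soon as it lies in the invariant subcomplex $\Omega^{*}_{X_{M}+\sqrt{-1}Y_{M}}(M)$, because $d_{X+\sqrt{-1}Y}^{2}=L_{X_{M}+\sqrt{-1}Y_{M}}$. So the real content is to check that $X'+\sqrt{-1}Y'$ and $Y'-\sqrt{-1}X'$ are $(X_{M}+\sqrt{-1}Y_{M})$-invariant, i.e. annihilated by $L_{X_{M}+\sqrt{-1}Y_{M}}=L_{X_{M}}+\sqrt{-1}L_{Y_{M}}$; then $d_{X+\sqrt{-1}Y}$ applied to them lands in the invariant subcomplex and a second application gives $L_{X_{M}+\sqrt{-1}Y_{M}}$ of an invariant form, which vanishes.

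First I would record the basic invariance facts. Since $X_{M}$ is Killing and $X'$ is its metric dual, $L_{X_{M}}X'=0$; likewise $L_{Y_{M}}Y'=0$. Next I would invoke Lemma 2, which gives $L_{X_{M}}Y'+L_{Y_{M}}X'=0$. Now compute:
\begin{align*}
L_{X_{M}+\sqrt{-1}Y_{M}}(X'+\sqrt{-1}Y')
&=(L_{X_{M}}+\sqrt{-1}L_{Y_{M}})(X'+\sqrt{-1}Y')\\
&=L_{X_{M}}X'-L_{Y_{M}}Y'+\sqrt{-1}\bigl(L_{X_{M}}Y'+L_{Y_{M}}X'\bigr)=0,
\end{align*}
using $L_{X_{M}}X'=0$, $L_{Y_{M}}Y'=0$ and Lemma 2 for the imaginary part. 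A completely parallel computation handles the second form:
\begin{align*}
L_{X_{M}+\sqrt{-1}Y_{M}}(Y'-\sqrt{-1}X')
&=L_{X_{M}}Y'+L_{Y_{M}}X'+\sqrt{-1}\bigl(L_{Y_{M}}Y'-L_{X_{M}}X'\bigr)=0.
\end{align*}
Hence both $X'+\sqrt{-1}Y'$ and $Y'-\sqrt{-1}X'$ lie in $\Omega^{*}_{X_{M}+\sqrt{-1}Y_{M}}(M)$.

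Finally I would conclude: for any $\alpha\in\Omega^{*}_{X_{M}+\sqrt{-1}Y_{M}}(M)$ one has $d_{X+\sqrt{-1}Y}(d_{X+\sqrt{-1}Y}\alpha)=d_{X+\sqrt{-1}Y}^{2}\alpha=L_{X_{M}+\sqrt{-1}Y_{M}}\alpha=0$, and moreover $d_{X+\sqrt{-1}Y}\alpha$ is itself invariant because $d_{X+\sqrt{-1}Y}$ commutes with $L_{X_{M}+\sqrt{-1}Y_{M}}$ (both built from operators commuting with the Lie derivative), so $d_{X+\sqrt{-1}Y}\alpha$ genuinely sits in the complex and is $d_{X+\sqrt{-1}Y}$-closed. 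Applying this with $\alpha=X'+\sqrt{-1}Y'$ and $\alpha=Y'-\sqrt{-1}X'$ finishes the proof. I do not anticipate a genuine obstacle here; the only point requiring care is making sure the invariance of the 1-forms is in place before invoking $d_{X+\sqrt{-1}Y}^{2}=L_{X_{M}+\sqrt{-1}Y_{M}}$, which is exactly why Lemma 2 was proved first. If one wanted an alternative, one could instead expand $d_{X+\sqrt{-1}Y}(X'+\sqrt{-1}Y')$ explicitly into its real and imaginary parts and verify the closedness conditions from Lemma 1 directly, but the abstract argument via $d_{X+\sqrt{-1}Y}^{2}$ is cleaner.
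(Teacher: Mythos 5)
Your proposal is correct and follows essentially the same route as the paper: both reduce the claim to $d_{X+\sqrt{-1}Y}^{2}=L_{X_{M}}+\sqrt{-1}L_{Y_{M}}$ and then verify $L_{X_{M}}X'=L_{Y_{M}}Y'=0$ (Killing) together with Lemma 2 for the cross terms. Your extra remark that the one-forms must first be checked to lie in the invariant subcomplex is a small point of care the paper leaves implicit, but the computation is identical.
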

\begin{proof}
\begin{align*}
d_{X+\sqrt{-1}Y}^{2}(X^{'}+\sqrt{-1}Y^{'})
&=(L_{X_{M}}+\sqrt{-1}L_{Y_{M}})(X^{'}+\sqrt{-1}Y^{'})\\
&=L_{X_{M}}X^{'}-L_{Y_{M}}Y^{'}+\sqrt{-1}(L_{X_{M}}Y^{'}+L_{Y_{M}}X^{'})\\
&=0
\end{align*}
So $d_{X+\sqrt{-1}Y}(X^{'}+\sqrt{-1}Y^{'})$ is the
$d_{X+\sqrt{-1}Y}$-closed form;

\begin{align*}
d_{X+\sqrt{-1}Y}^{2}(Y^{'}-\sqrt{-1}X^{'})
&=(L_{X_{M}}+\sqrt{-1}L_{Y_{M}})(Y^{'}-\sqrt{-1}X^{'})\\
&=L_{X_{M}}Y^{'}+L_{Y_{M}}X^{'}+\sqrt{-1}(L_{Y_{M}}Y^{'}-L_{X_{M}}X^{'})\\
&=0
\end{align*}
So $d_{X+\sqrt{-1}Y}(Y^{'}-\sqrt{-1}X^{'})$ is the
$d_{X+\sqrt{-1}Y}$-closed form.
\end{proof}

\begin{lemma}
If $X,Y\in\mathfrak{g}$ and with $[X,Y]=0$, then $[X_{M},Y_{M}]=0$.
\end{lemma}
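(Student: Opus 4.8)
The plan is to use the standard fact that the map $X\mapsto X_M$ sending a Lie algebra element to its induced fundamental vector field on $M$ is (up to sign) a Lie algebra anti-homomorphism. Concretely, with the sign convention $(X_M f)(x)=\frac{d}{dt}f(\exp(-tX)\cdot x)|_{t=0}$ adopted in the excerpt, one has $[X_M,Y_M]=[X,Y]_M$ as vector fields on $M$. I would first recall or verify this identity, then simply substitute $[X,Y]=0$ to conclude $[X,Y]_M=0_M=0$, hence $[X_M,Y_M]=0$.

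First I would establish the bracket identity. The cleanest route is to express the flow: the flow of $X_M$ at time $t$ is the diffeomorphism $x\mapsto \exp(tX)\cdot x$ (or $\exp(-tX)\cdot x$, depending on the convention; either way the computation is parallel). Then I would compute $[X_M,Y_M]$ as the Lie derivative $L_{X_M}Y_M = \frac{d}{dt}\big|_{t=0}(\phi_{-t}^{X_M})_* Y_M$, and push the computation down to the group: $(\phi_t^{X_M})_*$ corresponds to left translation (or conjugation) by a group element, so that the derivative at $t=0$ produces precisely the vector field induced by $[X,Y]\in\mathfrak g$ (up to the sign dictated by the chosen convention). Alternatively, and perhaps more transparently for this paper's audience, I would test both sides against an arbitrary smooth function $f$ on $M$: using the definition $(X_M f)(x)=\frac{d}{dt}f(\exp(-tX)x)|_{t=0}$, compute $(X_M(Y_M f) - Y_M(X_M f))(x)$ as a mixed second derivative of $(s,t)\mapsto f(\exp(-tX)\exp(-sY)x)$, and compare with $([X,Y]_M f)(x) = \frac{d}{dr}f(\exp(-r[X,Y])x)|_{r=0}$, invoking the standard formula $\exp(-tX)\exp(-sY)\exp(tX)\exp(sY) = \exp(st[X,Y]+O(3))$ for the group commutator.

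Having the identity $[X_M,Y_M]=[X,Y]_M$ in hand, the lemma is immediate: $[X,Y]=0$ gives $[X,Y]_M$ is the fundamental vector field associated to the zero element of $\mathfrak g$, which is identically zero on $M$, so $[X_M,Y_M]=0$.

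The only real obstacle here is bookkeeping with sign conventions: the paper uses the convention with $\exp(-tX)$, which makes $X\mapsto X_M$ either a homomorphism or an anti-homomorphism depending on how one sets things up, and one must be careful that the sign that appears is consistent so that $[X,Y]=0$ still forces the bracket of the induced fields to vanish. Since we only need the conclusion for $[X,Y]=0$, any overall sign is harmless, so even a slightly imprecise treatment of the convention still yields a correct proof; a careful writeup would nonetheless pin the sign down once and cite it (e.g. from the standard references already in the bibliography).
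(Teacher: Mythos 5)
Your proposal is correct, but it takes a slightly different (and stronger) route than the paper. You propose to first establish the general identity $[X_M,Y_M]=\pm[X,Y]_M$ --- that $X\mapsto X_M$ is a Lie algebra (anti-)homomorphism --- via flows or the BCH/group-commutator expansion, and then specialize to $[X,Y]=0$. The paper never proves (or needs) this general identity: it observes only that $[X,Y]=0$ forces the one-parameter subgroups to commute, $\exp(-tX)\exp(-sY)=\exp(-sY)\exp(-tX)$, writes $(X_M(Y_Mf)-Y_M(X_Mf))(p)$ as a difference of mixed second partials of $f$ along these two (now identical) two-parameter families, and concludes by equality of mixed partials. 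So the paper's argument is the special case of your "alternative" second route, stripped of the commutator expansion; it is shorter and avoids all sign bookkeeping, precisely because under the hypothesis the two flows literally coincide. Your approach buys the general bracket identity (useful elsewhere, and citable from the standard references), at the cost of having to pin down the sign convention --- though, as you correctly note, the sign is immaterial for the conclusion since $[X,Y]=0$ kills either version. Both arguments are complete and valid.
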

\begin{proof}
Because $[X,Y]=0$, so we have
$$\exp(-tX)\exp(-sY)=\exp(-sY)\exp(-tX)$$
where $s,t\in\mathbb{R}$, and for any $f\in C^{\infty}(M)$
$$([X_{M},Y_{M}]f)(p)=(X_{M}(Y_{M}f)-Y_{M}(X_{M}f))(p)$$
$$=\frac{\partial^{2}}{\partial t\partial s}\bigg|_{s=t=0}f\bigg(\exp(-tX)\exp(-sY)\cdot p\bigg)-\frac{\partial^{2}}{\partial s\partial t}\bigg|_{s=t=0}f\bigg(\exp(-sY)\exp(-tX)\cdot p\bigg)=0$$
So we get $[X_{M},Y_{M}]=0$.
\end{proof}

\begin{lemma}
If $X,Y\in\mathfrak{g}$ with $[X,Y]=0$, let $X_{M} ,Y_{M}$ be the corresponding
smooth vector field on $M$, $X^{'}, Y^{'}$ be the 1-form on $M$
which is dual to $X_{M} ,Y_{M}$ by the metric $g^{TM}$, then
$$L_{X_{M}}Y^{'}=0, \ L_{Y_{M}}X^{'}=0.$$
\end{lemma}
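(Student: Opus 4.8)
The plan is to combine the previous two lemmas. Lemma~2 gives the identity $L_{X_M}Y' + L_{Y_M}X' = 0$ unconditionally, so it suffices to prove one of the two asserted vanishings, say $L_{X_M}Y' = 0$; the other then follows immediately. For this I would first invoke Lemma~4 (the Jacobi-type lemma): since $[X,Y]=0$ in $\mathfrak{g}$, the induced vector fields satisfy $[X_M,Y_M]=0$. The point of having $[X_M,Y_M]=0$ is that it kills the bracket terms that appeared in the computation of Lemma~2.

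Concretely, I would evaluate $(L_{X_M}Y')(Z)$ on an arbitrary $Z\in\Gamma(TM)$ exactly as in the proof of Lemma~2, namely
$$(L_{X_M}Y')(Z) = X_M\langle Y_M, Z\rangle - \langle Y_M, [X_M,Z]\rangle.$$
Then, using that $X_M$ is Killing (so $\nabla X_M$ is skew, equivalently $L_{X_M}$ annihilates the metric), rewrite $X_M\langle Y_M,Z\rangle = \langle L_{X_M}Y_M, Z\rangle + \langle Y_M, L_{X_M}Z\rangle = \langle [X_M,Y_M],Z\rangle + \langle Y_M,[X_M,Z]\rangle$, just as in Lemma~2. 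Substituting, the two $\langle Y_M,[X_M,Z]\rangle$ terms cancel and we are left with $(L_{X_M}Y')(Z) = \langle [X_M,Y_M], Z\rangle$. By Lemma~4 this is zero for every $Z$, hence $L_{X_M}Y' = 0$. Finally, from Lemma~2, $L_{Y_M}X' = -L_{X_M}Y' = 0$.

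Alternatively, one could argue more symmetrically: by Lemma~2 we have $L_{Y_M}X' = -L_{X_M}Y'$, so running the same Killing-field computation with the roles of $X$ and $Y$ interchanged gives $L_{Y_M}X' = \langle [Y_M,X_M], Z\rangle$ evaluated against $Z$, and combining the two expressions forces $\langle[X_M,Y_M],Z\rangle = -\langle[X_M,Y_M],Z\rangle$, hence again the bracket vanishes — but this is circular unless one already has Lemma~4, so I prefer the direct route above. There is essentially no obstacle here: the only substantive input is Lemma~4, which has already been established, and the rest is the bookkeeping cancellation that Lemma~2's proof has already laid out. The one point to be careful about is making explicit that "$X_M$ Killing" is exactly what licenses the step $X_M\langle Y_M,Z\rangle = \langle L_{X_M}Y_M,Z\rangle + \langle Y_M, L_{X_M}Z\rangle$; I would cite the same reference ([11]) used in Lemma~2 for this.
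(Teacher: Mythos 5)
Your proposal is correct and follows essentially the same route as the paper: invoke Lemma~4 to get $[X_M,Y_M]=0$, then use the Killing-field identity to reduce $(L_{X_M}Y')(Z)$ to $\langle[X_M,Y_M],Z\rangle=0$. The only cosmetic difference is that you obtain $L_{Y_M}X'=0$ from the cancellation in Lemma~2 rather than by repeating the computation with $X$ and $Y$ interchanged, as the paper does; both are equally valid.
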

\begin{proof}
Because $[X,Y]=0$, by Lemma 4., $[X_{M} ,Y_{M}]=0$, then
$$(L_{X_{M}}Y^{'})(Z)=\langle[X_{M},Y_{M}],Z\rangle+\langle Y_{M},[X_{M},Z]\rangle-\langle[X_{M},Z],Y_{M}\rangle=0,$$
$$(L_{Y_{M}}X^{'})(Z)=\langle[Y_{M},X_{M}],Z\rangle+\langle X_{M},[Y_{M},Z]\rangle-\langle[Y_{M},Z],X_{M}\rangle=0.$$

\end{proof}

\begin{lemma}
If $X,Y\in\mathfrak{g}$ with $[X,Y]=0$, let $X_{M} ,Y_{M}$ be the corresponding
smooth vector field on $M$, $X^{'}, Y^{'}$ be the 1-form on $M$
which is dual to $X_{M} ,Y_{M}$ by the metric $g^{TM}$, then
\begin{description}
\item[1)] $d_{X+\sqrt{-1}Y}(X^{'}-\sqrt{-1}Y^{'})$
\item[2)] $d_{X+\sqrt{-1}Y}(Y^{'}+\sqrt{-1}X^{'})$
\end{description}
are the $d_{X+\sqrt{-1}Y}$-closed forms.
\end{lemma}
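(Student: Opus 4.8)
The plan is to mimic the proof of Lemma 3, namely to show that each of the two one-forms is annihilated by $d_{X+\sqrt{-1}Y}^{2}=L_{X_{M}+\sqrt{-1}Y_{M}}=L_{X_{M}}+\sqrt{-1}L_{Y_{M}}$, since a form killed by the square of $d_{X+\sqrt{-1}Y}$ is automatically $d_{X+\sqrt{-1}Y}$-invariant and hence $d_{X+\sqrt{-1}Y}$ applied to it is $d_{X+\sqrt{-1}Y}$-closed. (Strictly, $d_{X+\sqrt{-1}Y}(\alpha)$ being $d_{X+\sqrt{-1}Y}$-closed just needs $d_{X+\sqrt{-1}Y}^{2}\alpha=0$, so that $d_{X+\sqrt{-1}Y}(d_{X+\sqrt{-1}Y}\alpha)=d_{X+\sqrt{-1}Y}^{2}\alpha=0$ and also $\alpha$ lies in $\Omega^{*}_{X_{M}+\sqrt{-1}Y_{M}}(M)$.)

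First I would expand $(L_{X_{M}}+\sqrt{-1}L_{Y_{M}})(X^{'}-\sqrt{-1}Y^{'})$ into real and imaginary parts, getting $L_{X_{M}}X^{'}+L_{Y_{M}}Y^{'}+\sqrt{-1}(L_{Y_{M}}X^{'}-L_{X_{M}}Y^{'})$. The imaginary part vanishes immediately by Lemma 6 (the hypothesis $[X,Y]=0$ gives $L_{X_{M}}Y^{'}=0$ and $L_{Y_{M}}X^{'}=0$). For the real part, $L_{X_{M}}X^{'}=0$ and $L_{Y_{M}}Y^{'}=0$ because $X_{M},Y_{M}$ are Killing: a Killing field's Lie derivative annihilates its own metric dual (this is essentially $L_{X_{M}}g^{TM}=0$ together with $L_{X_{M}}X_{M}=[X_{M},X_{M}]=0$, so $L_{X_{M}}(g^{TM}(X_{M},-))=0$). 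Hence the whole expression is $0$ and item 1) follows. For item 2), the computation of $(L_{X_{M}}+\sqrt{-1}L_{Y_{M}})(Y^{'}+\sqrt{-1}X^{'})$ gives $L_{X_{M}}Y^{'}-L_{Y_{M}}X^{'}+\sqrt{-1}(L_{X_{M}}X^{'}+L_{Y_{M}}Y^{'})$, and all four terms vanish by the same two facts, so again the square is $0$.

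I do not expect a genuine obstacle here: the two identities $L_{X_{M}}X^{'}=L_{Y_{M}}Y^{'}=0$ (Killing) and $L_{X_{M}}Y^{'}=L_{Y_{M}}X^{'}=0$ (from $[X,Y]=0$, via Lemmas 4 and 6) together trivialize every term, in contrast with Lemma 3 where one only had the weaker cancellation $L_{X_{M}}Y^{'}+L_{Y_{M}}X^{'}=0$ from Lemma 2 and had to rely on the pairing $X^{'}+\sqrt{-1}Y^{'}$ (resp.\ $Y^{'}-\sqrt{-1}X^{'}$) to make the real (resp.\ imaginary) part cancel. The only point worth stating carefully is why $L_{X_{M}}X^{'}=0$ for a Killing field, which I would justify in one line as above, and then present the two displayed computations in \texttt{align*} environments exactly parallel to the proof of Lemma 3.
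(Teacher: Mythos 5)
Your proposal is correct and follows essentially the same route as the paper: both expand $d_{X+\sqrt{-1}Y}^{2}=L_{X_{M}}+\sqrt{-1}L_{Y_{M}}$ applied to each one-form into the same real and imaginary parts, kill the mixed terms $L_{X_{M}}Y^{'}$, $L_{Y_{M}}X^{'}$ using the $[X,Y]=0$ lemma, and use $L_{X_{M}}X^{'}=L_{Y_{M}}Y^{'}=0$ for the remaining terms (a fact the paper leaves implicit and you justify explicitly via the Killing property). The only difference is this added one-line justification, which is a harmless improvement.
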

\begin{proof}
Because $[X,Y]=0$, by Lemma 5., we have $L_{X_{M}}Y^{'}=0, \ L_{Y_{M}}X^{'}=0$;
\begin{align*}
d_{X+\sqrt{-1}Y}^{2}(X^{'}-\sqrt{-1}Y^{'})
&=(L_{X_{M}}+\sqrt{-1}L_{Y_{M}})(X^{'}-\sqrt{-1}Y^{'})\\
&=L_{X_{M}}X^{'}+L_{Y_{M}}Y^{'}+\sqrt{-1}(L_{Y_{M}}X^{'}-L_{X_{M}}Y^{'})\\
&=0
\end{align*}
So $d_{X+\sqrt{-1}Y}(X^{'}-\sqrt{-1}Y^{'})$ is the
$d_{X+\sqrt{-1}Y}$-closed form.

\begin{align*}
d_{X+\sqrt{-1}Y}^{2}(Y^{'}+\sqrt{-1}X^{'})
&=(L_{X_{M}}+\sqrt{-1}L_{Y_{M}})(Y^{'}+\sqrt{-1}X^{'})\\
&=L_{X_{M}}Y^{'}-L_{Y_{M}}X^{'}+\sqrt{-1}(L_{X_{M}}X^{'}+L_{Y_{M}}Y^{'})\\
&=0
\end{align*}
So $d_{X+\sqrt{-1}Y}(Y^{'}+\sqrt{-1}X^{'})$ is the
$d_{X+\sqrt{-1}Y}$-closed form.

\end{proof}

\section{The set of zero points}
In [6], we have get that for any $\eta\in H^{*}_{X+\sqrt{-1}Y}(M)$ and $s\geq 0$, we have
$$\int_{M}\eta=\int_{M}\exp\{-s(d_{X+\sqrt{-1}Y}(X^{'}+\sqrt{-1}Y^{'}))\}\eta.$$
Here we will give the same results about $d_{X+\sqrt{-1}Y}(Y^{'}-\sqrt{-1}X^{'})$, $d_{X+\sqrt{-1}Y}(X^{'}-\sqrt{-1}Y^{'})$ and $d_{X+\sqrt{-1}Y}(Y^{'}+\sqrt{-1}X^{'})$.

\begin{lemma}
For any $\eta\in H^{*}_{X+\sqrt{-1}Y}(M)$ and $s\geq 0$, we have
\begin{description}
\item[1)] $\int_{M}\eta=\int_{M}\exp\{-s(d_{X+\sqrt{-1}Y}(Y^{'}-\sqrt{-1}X^{'}))\}\eta$,
\item[2)] When $[X,Y]=0$, then $\int_{M}\eta=\int_{M}\exp\{-s(d_{X+\sqrt{-1}Y}(X^{'}-\sqrt{-1}Y^{'}))\}\eta$,
\item[3)] When $[X,Y]=0$, then $\int_{M}\eta=\int_{M}\exp\{-s(d_{X+\sqrt{-1}Y}(Y^{'}+\sqrt{-1}X^{'}))\}\eta$.
\end{description}

\end{lemma}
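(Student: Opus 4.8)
The plan is to reduce all three statements to the single fact that $\int_M d_{X+\sqrt{-1}Y}\alpha = 0$ for any $(X_M+\sqrt{-1}Y_M)$-invariant form $\alpha$. This follows because $d_{X+\sqrt{-1}Y} = d + i_{X_M+\sqrt{-1}Y_M}$, and on a closed oriented manifold $\int_M d\alpha = 0$ by Stokes, while $\int_M i_{X_M+\sqrt{-1}Y_M}\alpha = 0$ for degree reasons (the interior product lowers form degree, so its top-degree component vanishes). Thus $\int_M d_{X+\sqrt{-1}Y}\beta = 0$ whenever $\beta$ is invariant.

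First I would fix one of the three cases, say case 1), and let $\theta = d_{X+\sqrt{-1}Y}(Y^{'}-\sqrt{-1}X^{'})$, which is $d_{X+\sqrt{-1}Y}$-closed by Lemma 3. Consider the function $s \mapsto I(s) := \int_M \exp(-s\theta)\,\eta$ for $s \geq 0$, and compute its derivative: $\frac{d}{ds}I(s) = \int_M (-\theta)\exp(-s\theta)\,\eta$. Since $\theta$ is $d_{X+\sqrt{-1}Y}$-closed and $\eta$ is $d_{X+\sqrt{-1}Y}$-closed, and $d_{X+\sqrt{-1}Y}$ is an (anti)derivation, the form $\theta \exp(-s\theta)\eta$ is actually $d_{X+\sqrt{-1}Y}$-exact: writing $\theta = d_{X+\sqrt{-1}Y}(Y^{'}-\sqrt{-1}X^{'})$, one checks
$$\theta\exp(-s\theta)\eta = d_{X+\sqrt{-1}Y}\bigl((Y^{'}-\sqrt{-1}X^{'})\exp(-s\theta)\eta\bigr),$$
using that $d_{X+\sqrt{-1}Y}(\exp(-s\theta)) = 0$ (since $\theta$ is closed and $d_{X+\sqrt{-1}Y}$ is a derivation on the invariant complex) and $d_{X+\sqrt{-1}Y}\eta = 0$. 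One must also verify that $(Y^{'}-\sqrt{-1}X^{'})\exp(-s\theta)\eta$ is $(X_M+\sqrt{-1}Y_M)$-invariant, which follows because $Y^{'}, X^{'}$ are dual to Killing fields (hence $L_{X_M}X^{'} = L_{X_M}Y^{'} + L_{Y_M}X^{'}$ combinations are controlled by Lemma 2 and the Killing property), $\theta$ is invariant being in the image of $d_{X+\sqrt{-1}Y}$ restricted to the invariant complex, and $\eta$ is invariant. Therefore $\frac{d}{ds}I(s) = -\int_M d_{X+\sqrt{-1}Y}(\cdots) = 0$, so $I(s)$ is constant, and evaluating at $s=0$ gives $I(s) = I(0) = \int_M \eta$.

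Cases 2) and 3) are handled identically, except that the relevant forms $d_{X+\sqrt{-1}Y}(X^{'}-\sqrt{-1}Y^{'})$ and $d_{X+\sqrt{-1}Y}(Y^{'}+\sqrt{-1}X^{'})$ are only $d_{X+\sqrt{-1}Y}$-closed under the hypothesis $[X,Y]=0$, which is exactly the content of Lemma 6; the invariance of the primitive form $(X^{'}-\sqrt{-1}Y^{'})$ (resp. $(Y^{'}+\sqrt{-1}X^{'})$) similarly uses Lemma 5, i.e. $L_{X_M}Y^{'} = L_{Y_M}X^{'} = 0$, together with the Killing property for $L_{X_M}X^{'}$ and $L_{Y_M}Y^{'}$. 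So I would state the argument once in full for case 1) and then remark that cases 2) and 3) follow verbatim with Lemma 6 and Lemma 5 in place of Lemma 3 and Lemma 2.

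I expect the main obstacle to be the bookkeeping for invariance, namely confirming that each primitive form $(Y^{'}-\sqrt{-1}X^{'})\exp(-s\theta)\eta$ genuinely lies in $\Omega^{*}_{X_M+\sqrt{-1}Y_M}(M)$, since $d_{X+\sqrt{-1}Y}$ only behaves as an honest differential (with $d_{X+\sqrt{-1}Y}^2 = 0$) on that invariant subcomplex. The degree-counting point that $\int_M i_{X_M+\sqrt{-1}Y_M}(\text{anything}) = 0$ is routine, and the derivative computation is a standard power-series manipulation; the subtlety is purely that $L_{X_M+\sqrt{-1}Y_M}$ must annihilate every intermediate form, which I would verify by noting invariance is preserved under wedge products and that each building block ($X^{'}, Y^{'}$ via the Killing condition and the cited lemmas; $\theta$ and $\eta$ by construction) is invariant.
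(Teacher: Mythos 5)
Your proposal is correct and follows essentially the same route as the paper: differentiate $I(s)=\int_M\exp(-s\theta)\eta$ in $s$, use the $d_{X+\sqrt{-1}Y}$-closedness of $\theta$ (Lemma 3 for case 1, Lemma 6 under $[X,Y]=0$ for cases 2 and 3) and of $\eta$ to exhibit the integrand as $d_{X+\sqrt{-1}Y}$ of a primitive, and kill the integral by Stokes plus the degree argument for the contraction term. Your extra care about verifying that the primitive $(Y^{'}-\sqrt{-1}X^{'})\exp(-s\theta)\eta$ lies in the invariant subcomplex is a point the paper passes over silently, but it does not change the argument.
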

\begin{proof}
For 1), because $$\frac{\partial}{\partial
s}\int_{M}\exp\{-s(d_{X+\sqrt{-1}Y}(Y^{'}-\sqrt{-1}X^{'}))\}\eta$$
$$=-\int_{M}(d_{X+\sqrt{-1}Y}(Y^{'}-\sqrt{-1}X^{'}))\exp\{-s(d_{X+\sqrt{-1}Y}(Y^{'}-\sqrt{-1}X^{'}))\}\eta$$
and by assumption we have
$$d_{X+\sqrt{-1}Y}\eta=0$$
$$d_{X+\sqrt{-1}Y}\exp\{-s(d_{X+\sqrt{-1}Y}(Y^{'}-\sqrt{-1}X^{'}))\}=0$$
So we get
$$(d_{X+\sqrt{-1}Y}(Y^{'}-\sqrt{-1}X^{'}))\exp\{-s(d_{X+\sqrt{-1}Y}(Y^{'}-\sqrt{-1}X^{'}))\}\eta$$
$$=d_{X+\sqrt{-1}Y}[(Y^{'}-\sqrt{-1}X^{'})\exp\{-s(d_{X+\sqrt{-1}Y}(Y^{'}-\sqrt{-1}X^{'}))\}\eta]$$
and by Stokes formula we have
$$\frac{\partial}{\partial
s}\int_{M}\exp\{-s(d_{X+\sqrt{-1}Y}(Y^{'}-\sqrt{-1}X^{'}))\}\eta=0$$
Then we get
$$\int_{M}\eta=\int_{M}\exp\{-s(d_{X+\sqrt{-1}Y}(Y^{'}-\sqrt{-1}X^{'}))\}\eta$$

For 2) and 3), when $[X,Y]=0$, we have
$$d_{X+\sqrt{-1}Y}\exp\{-s(d_{X+\sqrt{-1}Y}(X^{'}-\sqrt{-1}Y^{'}))\}=0,$$
$$d_{X+\sqrt{-1}Y}\exp\{-s(d_{X+\sqrt{-1}Y}(Y^{'}+\sqrt{-1}X^{'}))\}=0,$$
so by the same way as in 1), we can get the results.
\end{proof}

For
$$d_{X+\sqrt{-1}Y}(Y^{'}-\sqrt{-1}X^{'})
=d(Y^{'}-\sqrt{-1}X^{'})+\langle X_{M}+\sqrt{-1}Y_{M},
Y_{M}-\sqrt{-1}X_{M}\rangle$$ and $$\langle X_{M}+\sqrt{-1}Y_{M},
Y_{M}-\sqrt{-1}X_{M}\rangle=2\langle X_{M}, Y_{M}\rangle+\sqrt{-1}(|Y_{M}|^{2}-|X_{M}|^{2})$$

Set
$${\rm{Zero}}(Y_{M}-\sqrt{-1}X_{M})=\{x\in M \mid \langle X_{M}(x)+\sqrt{-1}Y_{M}(x),
Y_{M}(x)-\sqrt{-1}X_{M}(x)\rangle=0\}.$$
We can see that $${\rm{Zero}}(Y_{M}-\sqrt{-1}X_{M})=\{x\in M \mid \langle X_{M}(x), Y_{M}(x)\rangle=0, |Y_{M}(x)|=|X_{M}(x)|\},$$
This set of zero points ${\rm{Zero}}(Y_{M}-\sqrt{-1}X_{M})$ is the same as in [6]. This set of zero points is first discussed by H.Jacobowitz (see [8] and [9]).

For
$$d_{X+\sqrt{-1}Y}(X^{'}-\sqrt{-1}Y^{'})
=d(X^{'}-\sqrt{-1}Y^{'})+\langle X_{M}+\sqrt{-1}Y_{M},
X_{M}-\sqrt{-1}Y_{M}\rangle$$ and $$\langle X_{M}+\sqrt{-1}Y_{M},
X_{M}-\sqrt{-1}Y_{M}\rangle=|X_{M}|^{2}+|Y_{M}|^{2},$$
set
$${\rm{Zero}}(X_{M}-\sqrt{-1}Y_{M})=\{x\in M \mid \langle X_{M}(x)+\sqrt{-1}Y_{M}(x),
X_{M}(x)-\sqrt{-1}Y_{M}(x)\rangle=0\}.$$

For
$$d_{X+\sqrt{-1}Y}(Y^{'}+\sqrt{-1}X^{'})
=d(Y^{'}+\sqrt{-1}X^{'})+\langle X_{M}+\sqrt{-1}Y_{M},
Y_{M}+\sqrt{-1}X_{M}\rangle$$ and $$\langle X_{M}+\sqrt{-1}Y_{M},
Y_{M}+\sqrt{-1}X_{M}\rangle=\sqrt{-1}(|X_{M}|^{2}+|Y_{M}|^{2}),$$
set
$${\rm{Zero}}(Y_{M}+\sqrt{-1}X_{M})=\{x\in M \mid \langle X_{M}(x)+\sqrt{-1}Y_{M}(x),
Y_{M}(x)+\sqrt{-1}X_{M}(x)\rangle=0\}.$$

We can see that $${\rm{Zero}}(X_{M}-\sqrt{-1}Y_{M})={\rm{Zero}}(Y_{M}+\sqrt{-1}X_{M})=\{x\in M \mid |X_{M}(x)|=|Y_{M}(x)|=0\}.$$

So we get two kinds of zero points, the one is
$$\{x\in M \mid \langle X_{M}(x), Y_{M}(x)\rangle=0, |Y_{M}(x)|=|X_{M}(x)|\},$$
the other one is
$$\{x\in M \mid |X_{M}(x)|=|Y_{M}(x)|=0\};$$
obviously
$$\{x\in M \mid |X_{M}(x)|=|Y_{M}(x)|=0\}\subset\{x\in M \mid \langle X_{M}(x), Y_{M}(x)\rangle=0, |Y_{M}(x)|=|X_{M}(x)|\}.$$

\begin{corollery}
For any $\eta\in H^{*}_{X+\sqrt{-1}Y}(M)$ with $[X,Y]=0$ and $s\geq 0$, we have
\begin{description}
\item[1)] $$\int_{M}\exp\{-s(d_{X+\sqrt{-1}Y}(X^{'}+\sqrt{-1}Y^{'}))\}\eta$$
$$=\int_{M}\exp\{-s(d_{X+\sqrt{-1}Y}(X^{'}-\sqrt{-1}Y^{'}))\}\exp\{-s(d_{X+\sqrt{-1}Y}(X^{'}+\sqrt{-1}Y^{'}))\}\eta,$$
\item[2)] $$\int_{M}\exp\{-s(d_{X+\sqrt{-1}Y}(Y^{'}-\sqrt{-1}X^{'}))\}\eta$$
$$=\int_{M}\exp\{-s(d_{X+\sqrt{-1}Y}(X^{'}-\sqrt{-1}Y^{'}))\}\exp\{-s(d_{X+\sqrt{-1}Y}(Y^{'}-\sqrt{-1}X^{'}))\}\eta,$$
\item[3)] $$\int_{M}\exp\{-s(d_{X+\sqrt{-1}Y}(Y^{'}+\sqrt{-1}X^{'}))\}\eta$$
$$=\int_{M}\exp\{-s(d_{X+\sqrt{-1}Y}(X^{'}-\sqrt{-1}Y^{'}))\}\exp\{-s(d_{X+\sqrt{-1}Y}(Y^{'}+\sqrt{-1}X^{'}))\}\eta.$$
\end{description}
\end{corollery}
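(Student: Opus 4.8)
The plan is to deduce all three identities from part 2) of Lemma 7, applied not to $\eta$ itself but to the product of $\eta$ with the exponential of the first $d_{X+\sqrt{-1}Y}$-closed form occurring on the left-hand side. Concretely, for 1) I would put $\tilde\eta = \exp\{-s(d_{X+\sqrt{-1}Y}(X'+\sqrt{-1}Y'))\}\eta$, for 2) $\tilde\eta = \exp\{-s(d_{X+\sqrt{-1}Y}(Y'-\sqrt{-1}X'))\}\eta$, and for 3) $\tilde\eta = \exp\{-s(d_{X+\sqrt{-1}Y}(Y'+\sqrt{-1}X'))\}\eta$; in each case the left-hand side of the asserted identity is exactly $\int_M\tilde\eta$.

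The first step is to verify that $\tilde\eta\in H^*_{X+\sqrt{-1}Y}(M)$, i.e. that $\tilde\eta$ is $L_{X_M+\sqrt{-1}Y_M}$-invariant and $d_{X+\sqrt{-1}Y}$-closed. Writing $\beta$ for the relevant combination of $X'$ and $Y'$, Lemma 3 (for 1) and 2)) and Lemma 6 together with $[X,Y]=0$ (for 3)) give $d_{X+\sqrt{-1}Y}^{2}\beta=0$. Since $L_{X_M+\sqrt{-1}Y_M}=d_{X+\sqrt{-1}Y}^{2}$ commutes with $d_{X+\sqrt{-1}Y}$, this yields $d_{X+\sqrt{-1}Y}(d_{X+\sqrt{-1}Y}\beta)=0$ and $L_{X_M+\sqrt{-1}Y_M}(d_{X+\sqrt{-1}Y}\beta)=0$. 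As $d_{X+\sqrt{-1}Y}$ is an odd derivation and $L_{X_M+\sqrt{-1}Y_M}$ an even one, both annihilate the finite series $\exp\{-s(d_{X+\sqrt{-1}Y}\beta)\}$ term by term; multiplying by $\eta$, which is itself closed and invariant because $\eta\in H^*_{X+\sqrt{-1}Y}(M)$, preserves these two properties, so indeed $\tilde\eta\in H^*_{X+\sqrt{-1}Y}(M)$.

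The second step is then immediate: applying part 2) of Lemma 7 to $\tilde\eta$ — legitimate precisely because $[X,Y]=0$, so that $d_{X+\sqrt{-1}Y}(X'-\sqrt{-1}Y')$ is $d_{X+\sqrt{-1}Y}$-closed by Lemma 6 — gives $\int_M\tilde\eta=\int_M\exp\{-s(d_{X+\sqrt{-1}Y}(X'-\sqrt{-1}Y'))\}\tilde\eta$, and substituting back the definition of $\tilde\eta$ turns the right-hand side into the right-hand side of the corresponding identity in the corollary, while $\int_M\tilde\eta$ is its left-hand side. I do not anticipate a genuine obstacle; the only delicate point is the bookkeeping of the first step, namely confirming that the exponential of the $d_{X+\sqrt{-1}Y}$-exact form built from the dual one-forms stays in the complex $\Omega^*_{X_M+\sqrt{-1}Y_M}(M)$ and remains $d_{X+\sqrt{-1}Y}$-closed — exactly the formal computation already used implicitly in the proof of Lemma 7, and purely routine once one recalls $d_{X+\sqrt{-1}Y}^{2}=L_{X_M+\sqrt{-1}Y_M}$.
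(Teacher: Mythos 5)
Your proposal is correct and takes essentially the same route as the paper: the paper likewise observes that each product $\exp\{-s(d_{X+\sqrt{-1}Y}\beta)\}\eta$ lies in $H^{*}_{X+\sqrt{-1}Y}(M)$ and then applies part 2) of Lemma 7 to that element. You merely spell out the verification of $d_{X+\sqrt{-1}Y}$-closedness and $L_{X_{M}+\sqrt{-1}Y_{M}}$-invariance that the paper asserts without proof.
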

\begin{proof}
Because $\exp\{-s(d_{X+\sqrt{-1}Y}(X^{'}+\sqrt{-1}Y^{'}))\}\eta\in H^{*}_{X+\sqrt{-1}Y}(M)$, $\exp\{-s(d_{X+\sqrt{-1}Y}(Y^{'}-\sqrt{-1}X^{'}))\}\eta\in H^{*}_{X+\sqrt{-1}Y}(M)$,
$\exp\{-s(d_{X+\sqrt{-1}Y}(Y^{'}+\sqrt{-1}X^{'}))\}\eta\in H^{*}_{X+\sqrt{-1}Y}(M)$.
So by Lemma 7., we get the result.

\end{proof}

\begin{lemma}
For any $\eta\in H^{*}_{X+\sqrt{-1}Y}(M)$ and $[X,Y]=0$, if ${\rm{Zero}}(X_{M}-\sqrt{-1}Y_{M})=\emptyset$, then $\int_{M}\eta=0$.
\end{lemma}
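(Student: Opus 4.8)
The plan is to use the $d_{X+\sqrt{-1}Y}$-closed form $d_{X+\sqrt{-1}Y}(X^{'}-\sqrt{-1}Y^{'})$ — which is precisely the one whose associated zero set is ${\rm{Zero}}(X_{M}-\sqrt{-1}Y_{M})$ — and to push the parameter $s$ to infinity. Since $[X,Y]=0$, Lemma 6 guarantees that $d_{X+\sqrt{-1}Y}(X^{'}-\sqrt{-1}Y^{'})$ is $d_{X+\sqrt{-1}Y}$-closed, so Lemma 7 2) applies and yields, for every $s\geq 0$,
$$\int_{M}\eta=\int_{M}\exp\{-s(d_{X+\sqrt{-1}Y}(X^{'}-\sqrt{-1}Y^{'}))\}\eta .$$
The left-hand side does not depend on $s$; hence it suffices to prove that the right-hand side tends to $0$ as $s\to+\infty$.

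Next I would analyze the exponent. As computed just before the statement,
$$d_{X+\sqrt{-1}Y}(X^{'}-\sqrt{-1}Y^{'})=d(X^{'}-\sqrt{-1}Y^{'})+\bigl(|X_{M}|^{2}+|Y_{M}|^{2}\bigr),$$
an inhomogeneous form whose degree-$0$ component is the function $f:=|X_{M}|^{2}+|Y_{M}|^{2}$ and whose degree-$2$ component is $\beta:=d(X^{'}-\sqrt{-1}Y^{'})$. The degree-$0$ part is central and $\beta$ is nilpotent under the wedge product (it has positive even degree and $M$ is finite-dimensional), so the exponential factors cleanly:
$$\exp\{-s(d_{X+\sqrt{-1}Y}(X^{'}-\sqrt{-1}Y^{'}))\}=e^{-sf}\sum_{k\geq 0}\frac{(-s)^{k}}{k!}\,\beta^{k},$$
a finite sum in $k$ (only $2k\leq\dim M$ contribute). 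Thus the integrand is $e^{-sf}$ times a differential form whose coefficient functions are polynomials in $s$ of degree at most $\dim M$.

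Finally I would invoke compactness. The hypothesis ${\rm{Zero}}(X_{M}-\sqrt{-1}Y_{M})=\emptyset$ says precisely that $f$ never vanishes on $M$; since $f\geq 0$ and $M$ is compact, there is a constant $c>0$ with $f\geq c$ everywhere. Consequently the top-degree coefficient of $\exp\{-s(d_{X+\sqrt{-1}Y}(X^{'}-\sqrt{-1}Y^{'}))\}\eta$ is bounded in absolute value by $C(1+s)^{\dim M}e^{-cs}$, where $C$ depends only on $\eta$, the metric, and $X^{'},Y^{'}$. Integrating over the compact manifold $M$ and letting $s\to+\infty$ forces $\int_{M}\exp\{-s(d_{X+\sqrt{-1}Y}(X^{'}-\sqrt{-1}Y^{'}))\}\eta\to 0$, and combined with the $s$-independence from the first step this gives $\int_{M}\eta=0$.

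I expect the only slightly delicate point to be the second paragraph: verifying that the exponential of the inhomogeneous form $-s\,d_{X+\sqrt{-1}Y}(X^{'}-\sqrt{-1}Y^{'})$ is well defined and genuinely splits off the scalar factor $e^{-sf}$, so that the bound in the last step is uniform over $M$. Everything else is the familiar principle that true exponential decay beats polynomial growth. It is worth emphasizing that this is exactly why $d_{X+\sqrt{-1}Y}(X^{'}-\sqrt{-1}Y^{'})$ is the right form to exploit here: its degree-$0$ part $|X_{M}|^{2}+|Y_{M}|^{2}$ is real and nonnegative, whereas the analogous scalar attached to $d_{X+\sqrt{-1}Y}(Y^{'}+\sqrt{-1}X^{'})$ is purely imaginary, so there $e^{-sf}$ would merely oscillate rather than decay.
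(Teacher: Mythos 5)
Your proposal is correct and follows essentially the same route as the paper: apply Lemma 7 2), factor $\exp\{-s\,d_{X+\sqrt{-1}Y}(X^{'}-\sqrt{-1}Y^{'})\}$ into the scalar $e^{-s(|X_{M}|^{2}+|Y_{M}|^{2})}$ times a form with polynomially bounded coefficients, and let $s\to+\infty$. You merely spell out the compactness bound $|X_{M}|^{2}+|Y_{M}|^{2}\geq c>0$ and the exponential-beats-polynomial step that the paper dismisses as "easily seen."
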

\begin{proof}
Because $$d_{X+\sqrt{-1}Y}(X^{'}-\sqrt{-1}Y^{'})
=d(X^{'}-\sqrt{-1}Y^{'})+|X_{M}|^{2}+|Y_{M}|^{2}$$
so
$$\int_{M}\exp\{-s(d_{X+\sqrt{-1}Y}(X^{'}+\sqrt{-1}Y^{'}))\}\eta=\int_{M}\exp\{-s(|X_{M}|^{2}+|Y_{M}|^{2})\}\exp\{-sd(X^{'}-\sqrt{-1}Y^{'})\}\eta.$$
By ${\rm{Zero}}(X_{M}-\sqrt{-1}Y_{M})=\emptyset$, we can see easily that when $s\rightarrow+\infty$, the right hand side of the above equality is of exponential decay and so the result follows.
\end{proof}

\section{Localization formula on ${\rm{Zero}}(X_{M}-\sqrt{-1}Y_{M})$}
In the following section we denote ${\rm{Zero}}(X_{M}-\sqrt{-1}Y_{M})$ by $M_{0}$.
For simplicity, we assume that $M_{0}$ is the
connected submanifold of $M$, and $\mathcal{N}$ is the normal bundle
of $M_{0}$ about $M$.

Set $E$ is a G-equivariant vector bundle, if $\nabla^{E}$ is a
connection on $E$ which commutes with the action of $G$ on
$\Omega(M,E)$, we see that $$[\nabla^{E}, L^{E}_{X}]=0$$ for all
$X\in\mathfrak{g}$. Then we can get a moment map by
$$\mu^{E}(X)=L^{E}_{X}-[\nabla^{E},i_{X}]=L^{E}_{X}-\nabla^{E}_{X}$$
We known that if $y$ be the tautological section of the bundle
$\pi^{*}E$ over E, then the vertical component of $X_{E}$ may be
identified with $-\mu^{E}(X)y$(see [2] proposition 7.6). For the normal bundle $\mathcal{N}$ of $M_0$, the vector fields $X_{\mathcal{N}}$ and $Y_{\mathcal{N}}$ are vertical
and are given at the point $(x,y)\in M_{0}\times\mathcal{N}_{x}$ by
the vectors $-\mu^{\mathcal{N}}(X)y,
-\mu^{\mathcal{N}}(Y)y\in\mathcal{N}_{x}$.\par

If $E$ is the tangent bundle $TM$ and $\nabla^{TM}$ is Levi-Civita
connection, then we have
$$\mu^{TM}(X)Y=L_{X}Y-\nabla^{TM}_{X}Y=-\nabla^{TM}_{Y}X$$
We known that for any Killing vector field $X$, $\mu^{TM}(X)$ as
linear endomorphisms of $TM$ is skew-symmetric, $-\mu^{TM}(X)$
annihilates the tangent bundle $TM_{0}$ and induces a skew-symmetric
automorphism of the normal bundle $\mathcal{N}$(see [10] chapter II,
proposition 2.2 and theorem 5.3). The restriction of $\mu^{TM}(X)$
to $\mathcal{N}$ coincides with the moment endomorphism
$\mu^{\mathcal{N}}(X)$.

Now we construct a one-form $\alpha$ on $\mathcal{N}$:
$$Z\in \Gamma(T\mathcal{N})\rightarrow\alpha(Z)=\langle-\mu^{\mathcal{N}}(X)y,\nabla^{\mathcal{N}}_{Z}y\rangle-\sqrt{-1}\langle-\mu^{\mathcal{N}}(Y)y,\nabla^{\mathcal{N}}_{Z}y\rangle$$

Let $Z_{1},Z_{2}\in\Gamma(T\mathcal{N})$, we known
$d\alpha(Z_{1},Z_{2})=Z_{1}\alpha(Z_{2})-Z_{2}\alpha(Z_{1})-\alpha([Z_{1},Z_{2}])$,
so
\begin{align*}
d\alpha(Z_{1},Z_{2})
&=\langle-\nabla^{\mathcal{N}}_{Z_{1}}\mu^{\mathcal{N}}(X)y,\nabla^{\mathcal{N}}_{Z_{2}}y\rangle-\langle-\nabla^{\mathcal{N}}_{Z_{2}}\mu^{\mathcal{N}}(X)y,\nabla^{\mathcal{N}}_{Z_{1}}y\rangle\\
&-\sqrt{-1}\langle-\nabla^{\mathcal{N}}_{Z_{1}}\mu^{\mathcal{N}}(Y)y,\nabla^{\mathcal{N}}_{Z_{2}}y\rangle+\sqrt{-1}\langle-\nabla^{\mathcal{N}}_{Z_{2}}\mu^{\mathcal{N}}(Y)y,\nabla^{\mathcal{N}}_{Z_{1}}y\rangle\\
&+\langle-\mu^{\mathcal{N}}(X)y,R^{\mathcal{N}}(Z_{1},Z_{2})y\rangle-\sqrt{-1}\langle-\mu^{\mathcal{N}}(Y)y,R^{\mathcal{N}}(Z_{1},Z_{2})y\rangle\\
\end{align*}
Recall that $\nabla^{\mathcal{N}}$ is invariant under $L_{X}$ for
all $X\in\mathfrak{g}$, so that
$[\nabla^{\mathcal{N}},\mu^{\mathcal{N}}(X)]=0$,
$[\nabla^{\mathcal{N}},\mu^{\mathcal{N}}(Y)]=0$. And by $X,Y$ are
Killing vector field, we have $d\alpha$ equals
$$2\langle-(\mu^{\mathcal{N}}(X)-\sqrt{-1}\mu^{\mathcal{N}}(Y))
\cdot,\cdot\rangle+\langle-\mu^{\mathcal{N}}(X)y+\sqrt{-1}\mu^{\mathcal{N}}(Y)y,R^{\mathcal{N}}y\rangle$$
And by
$|X_{\mathcal{N}}|^{2}=\langle\mu^{\mathcal{N}}(X)y,\mu^{\mathcal{N}}(X)y\rangle$,
$|Y_{\mathcal{N}}|^{2}=\langle\mu^{\mathcal{N}}(Y)y,\mu^{\mathcal{N}}(Y)y\rangle$.
So We can get
\begin{align*}
d_{X_{\mathcal{N}}+\sqrt{-1}Y_{\mathcal{N}}}(X^{'}_{\mathcal{N}}-\sqrt{-1}Y^{'}_{\mathcal{N}})
&=d(X^{'}_{\mathcal{N}}-\sqrt{-1}Y^{'}_{\mathcal{N}})+\langle X_{\mathcal{N}}-\sqrt{-1}Y_{\mathcal{N}},X_{\mathcal{N}}+\sqrt{-1}Y_{\mathcal{N}}\rangle\\
&=-2\langle(\mu^{\mathcal{N}}(X)-\sqrt{-1}\mu^{\mathcal{N}}(Y))\cdot,\cdot\rangle\\
&+\langle-\mu^{\mathcal{N}}(X)y+\sqrt{-1}\mu^{\mathcal{N}}(Y)y,-\mu^{\mathcal{N}}(X)y-\sqrt{-1}\mu^{\mathcal{N}}(Y)y+R^{\mathcal{N}}y\rangle
\end{align*}

\begin{theorem}
Let $M$ be a smooth closed oriented manifold, $G$ be a compact Lie group
acting smoothly on $M$. For any $\eta\in H^{*}_{X+\sqrt{-1}Y}(M)$, $[X,Y]=0$, the following identity
hold:
$$\int_{M}\eta=\int_{M_{0}} \frac{\eta}{\rm{Pf}[\frac{-\mu^{\mathcal{N}}(X)-\sqrt{-1}\mu^{\mathcal{N}}(Y)+R^{\mathcal{N}}}{2\pi}]}$$
\end{theorem}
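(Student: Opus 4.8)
The plan is to follow the classical Berline–Vergne localization strategy, but using the equivariantly closed form $d_{X+\sqrt{-1}Y}(X'-\sqrt{-1}Y')$ as the deformation instead of the usual single-vector-field construction, which is legitimate precisely because of Lemma 6 and Lemma 7 (part 2), which require $[X,Y]=0$. First I would combine Lemma 7 with the explicit expression
$$d_{X+\sqrt{-1}Y}(X'-\sqrt{-1}Y')=d(X'-\sqrt{-1}Y')+|X_M|^2+|Y_M|^2$$
to write $\int_M\eta=\int_M\exp\{-s d(X'-\sqrt{-1}Y')\}\exp\{-s(|X_M|^2+|Y_M|^2)\}\eta$ for all $s>0$. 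Away from $M_0={\rm Zero}(X_M-\sqrt{-1}Y_M)$ the factor $\exp\{-s(|X_M|^2+|Y_M|^2)\}$ decays exponentially as $s\to+\infty$ (as already noted in the proof of Lemma 9), so the integral localizes to an arbitrarily small tubular neighborhood $U_\varepsilon$ of $M_0$.

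Next I would pass to the normal bundle $\mathcal{N}$ via the exponential map / a $G$-invariant tubular neighborhood diffeomorphism, so that the computation reduces to an integral over $\mathcal{N}$ of $\exp\{-s\,d_{X+\sqrt{-1}Y}(X'_{\mathcal{N}}-\sqrt{-1}Y'_{\mathcal{N}})\}$ times (the pullback of) $\eta$. Here I would invoke the one-form $\alpha$ constructed just before the theorem and the displayed identity for $d_{X_{\mathcal{N}}+\sqrt{-1}Y_{\mathcal{N}}}(X'_{\mathcal{N}}-\sqrt{-1}Y'_{\mathcal{N}})$, which exhibits it as (minus twice) a fibrewise quadratic form $\langle(\mu^{\mathcal{N}}(X)-\sqrt{-1}\mu^{\mathcal{N}}(Y))\cdot,\cdot\rangle$ plus a curvature correction $\langle-\mu^{\mathcal{N}}(X)y+\sqrt{-1}\mu^{\mathcal{N}}(Y)y,\;-\mu^{\mathcal{N}}(X)y-\sqrt{-1}\mu^{\mathcal{N}}(Y)y+R^{\mathcal{N}}y\rangle$. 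Performing the Gaussian integral along the fibres of $\mathcal{N}$ (a rescaling $y\mapsto y/\sqrt{s}$ absorbs all $s$-dependence, and in the limit only the fibre-constant part of $\pi^*\eta$, namely $\eta|_{M_0}$, survives) produces the Berline–Vergne type denominator: the fibre integral of $\exp$ of the quadratic-plus-curvature form evaluates to $\mathrm{Pf}[(-\mu^{\mathcal{N}}(X)-\sqrt{-1}\mu^{\mathcal{N}}(Y)+R^{\mathcal{N}})/2\pi]^{-1}$, using that $-\mu^{\mathcal{N}}(X)-\sqrt{-1}\mu^{\mathcal{N}}(Y)$ is invertible on $\mathcal{N}$ (its kernel would force $|X_M|=|Y_M|=0$ off $M_0$, contradiction) so the Pfaffian is nonzero and the whole expression is a well-defined equivariant form on $M_0$ whose integral makes sense.

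Assembling the pieces: the $s$-independence from Stokes, the exponential-decay localization, the tubular-neighborhood identification, and the fibrewise Gaussian/Berline–Vergne integral together give
$$\int_M\eta=\int_{M_0}\frac{\eta}{\mathrm{Pf}\!\left[\dfrac{-\mu^{\mathcal{N}}(X)-\sqrt{-1}\mu^{\mathcal{N}}(Y)+R^{\mathcal{N}}}{2\pi}\right]},$$
where in the denominator $R^{\mathcal{N}}$ is the curvature of $\nabla^{\mathcal{N}}$ and $\eta$ is restricted to $M_0$. I expect the main obstacle to be the fibrewise Gaussian integration step: one must justify interchanging the limit $s\to+\infty$ with the integral, control the non-fibre-constant terms of $\pi^*\eta$ and of the deformation form (the $d(X'-\sqrt{-1}Y')$-exact piece that does not sit purely in the fibre directions), and carefully evaluate the Berline–Vergne Gaussian to land exactly on the normalized Pfaffian — this is where the hypothesis $[X,Y]=0$ is essential, since without it $d_{X+\sqrt{-1}Y}(X'-\sqrt{-1}Y')$ is not $d_{X+\sqrt{-1}Y}$-closed (Lemma 6 fails) and the deformation argument collapses. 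A secondary technical point is checking orientability/sign conventions so that the Pfaffian is the correct one, and verifying that the construction is independent of the choices of $G$-invariant metric, connection $\nabla^{\mathcal{N}}$, and tubular neighborhood, which follows from the cohomological invariance built into Lemma 7.
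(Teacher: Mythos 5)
Your proposal follows the paper's own proof essentially step for step: deform by $\exp\{-s\,d_{X+\sqrt{-1}Y}(X'-\sqrt{-1}Y')\}$ using Lemma 7(2), localize to a tubular neighborhood via the exponential decay of $\exp\{-s(|X_M|^2+|Y_M|^2)\}$, identify that neighborhood with the normal bundle, and evaluate the fibrewise Gaussian using the displayed formula for $d_{X_{\mathcal{N}}+\sqrt{-1}Y_{\mathcal{N}}}(X'_{\mathcal{N}}-\sqrt{-1}Y'_{\mathcal{N}})$ to produce the Pfaffian denominator. The approach and the key ingredients (Lemmas 6, 7, the one-form $\alpha$, the rescaling of the fibre variable) coincide with the paper's argument, so the proposal is correct and not a genuinely different route.
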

\begin{proof}
Here we use the method come from [5]. Set $s=\frac{1}{2t}$, so by Lemma 7. we get
$$\int_{M}\eta=\int_{M}\exp\{-\frac{1}{2t}(d_{X+\sqrt{-1}Y}(X^{'}-\sqrt{-1}Y^{'}))\}\eta$$
Let $V$ is a neighborhood of $M_{0}$ in $\mathcal{N}$. We identify a
tubular neighborhood of $M_{0}$ in $M$ with $V$. Set $V^{'}\subset
V$. When $t\rightarrow 0$, because
$$\langle X_{M}(x)+\sqrt{-1}Y_{M}(x), X_{M}(x)-\sqrt{-1}Y_{M}(x)\rangle=|X_{M}|^{2}+|Y_{M}|^{2}\neq0$$
out of $M_{0}$, so we have
$$\int_{M}\exp\{-\frac{1}{2t}(d_{X+\sqrt{-1}Y}(X^{'}-\sqrt{-1}Y^{'}))\}\eta\sim\int_{V^{'}}\exp\{-\frac{1}{2t}(d_{X+\sqrt{-1}Y}(X^{'}-\sqrt{-1}Y^{'}))\}\eta.$$
Because
$$\int_{V^{'}}\exp\{-\frac{1}{2t}(d_{X+\sqrt{-1}Y}(X^{'}-\sqrt{-1}Y^{'}))\}\eta=\int_{V^{'}}\exp\{-\frac{1}{2t}(d_{X_{\mathcal{N}}+\sqrt{-1}Y_{\mathcal{N}}}(X_{\mathcal{N}}^{'}-\sqrt{-1}Y_{\mathcal{N}}^{'}))\}\eta$$
then
$$\int_{V^{'}}\exp\{-\frac{1}{2t}(d_{X+\sqrt{-1}Y}(X^{'}-\sqrt{-1}Y^{'}))\}\eta=$$
$$\int_{V^{'}}\exp\{\frac{1}{t}\langle(\mu^{\mathcal{N}}(X)-\sqrt{-1}\mu^{\mathcal{N}}(Y))\cdot,\cdot\rangle+\frac{1}{2t}\langle\mu^{\mathcal{N}}(X)y-\sqrt{-1}\mu^{\mathcal{N}}(Y)y,R^{\mathcal{N}}y\rangle\}\eta$$
$$+\int_{V^{'}}\exp\{-\frac{1}{2t}\langle-\mu^{\mathcal{N}}(X)y+\sqrt{-1}\mu^{\mathcal{N}}(Y)y, -\mu^{\mathcal{N}}(X)y-\sqrt{-1}\mu^{\mathcal{N}}(Y)y\rangle\}\eta$$
By making the change of variables $y=\sqrt{t}y$, we find that the
above formula is equal to
$$t^{n}\int_{V^{'}}\exp\{\frac{1}{t}\langle(\mu^{\mathcal{N}}(X)-\sqrt{-1}\mu^{\mathcal{N}}(Y))\cdot,\cdot\rangle+\frac{1}{2}\langle\mu^{\mathcal{N}}(X)y-\sqrt{-1}\mu^{\mathcal{N}}(Y)y,R^{\mathcal{N}}y\rangle\}\eta$$
$$+\int_{V^{'}}\exp\{-\frac{1}{2}\langle-\mu^{\mathcal{N}}(X)y+\sqrt{-1}\mu^{\mathcal{N}}(Y)y, -\mu^{\mathcal{N}}(X)y-\sqrt{-1}\mu^{\mathcal{N}}(Y)y\rangle\}\eta$$
we known that
$$\frac{(\frac{\langle(\mu^{\mathcal{N}}(X)-\sqrt{-1}\mu^{\mathcal{N}}(Y))\cdot,\cdot\rangle}{t})^{n}}{n!}=(\rm{Pf}(\mu^{\mathcal{N}}(X)-\sqrt{-1}\mu^{\mathcal{N}}(Y)))dy$$
here dy is the volume form of the submanifold $M_{0}$. Because $$(\rm{Pf}(\mu^{\mathcal{N}}(X)-\sqrt{-1}\mu^{\mathcal{N}}(Y)))^{2}=det(\mu^{\mathcal{N}}(X)-\sqrt{-1}\mu^{\mathcal{N}}(Y)),$$
let $n$ be the dimension of $M_{0}$, then we get
$$=\int_{V^{'}}\exp\{\frac{1}{2}\langle\mu^{\mathcal{N}}(X)y-\sqrt{-1}\mu^{\mathcal{N}}(Y)y,R^{\mathcal{N}}y\rangle\}\eta[\det(\mu^{\mathcal{N}}(X)-\sqrt{-1}\mu^{\mathcal{N}}(Y))]^{\frac{1}{2}}dy_{1}\wedge...\wedge dy_{n}$$
$$+\int_{V^{'}}\exp\{-\frac{1}{2}\langle-\mu^{\mathcal{N}}(X)y+\sqrt{-1}\mu^{\mathcal{N}}(Y)y, -\mu^{\mathcal{N}}(X)y-\sqrt{-1}\mu^{\mathcal{N}}(Y)y\rangle\}\eta$$
Because by $[X,Y]=0$ we have $[\mu^{TM}(X),\mu^{TM}(Y)]=0$.
And by $-\mu^{\mathcal{N}}(X)-\sqrt{-1}\mu^{\mathcal{N}}(Y)$,
$R^{\mathcal{N}}$ are skew-symmetric, so we get
$$=\int_{V^{'}}\exp\{-\frac{1}{2}\langle-\mu^{\mathcal{N}}(X)y+\sqrt{-1}\mu^{\mathcal{N}}(Y)y,-\mu^{\mathcal{N}}(X)y-\sqrt{-1}\mu^{\mathcal{N}}(Y)y+R^{\mathcal{N}}y\rangle\}dy_{1}\wedge...\wedge dy_{n}$$
$$\cdot[\det(\mu^{\mathcal{N}}(X)-\sqrt{-1}\mu^{\mathcal{N}}(Y))]^{\frac{1}{2}}\eta$$
$$=\int_{M_{0}}(2\pi)^{n}[\det(\mu^{\mathcal{N}}(X)-\sqrt{-1}\mu^{\mathcal{N}}(Y))]^{-\frac{1}{2}}[\det(-\mu^{\mathcal{N}}(X)-\sqrt{-1}\mu^{\mathcal{N}}(Y)+R^{\mathcal{N}})]^{-\frac{1}{2}}$$
$$\cdot[\det(\mu^{\mathcal{N}}(X)-\sqrt{-1}\mu^{\mathcal{N}}(Y))]^{\frac{1}{2}}\eta$$
$$=\int_{M_{0}}(2\pi)^{n}[\det(-\mu^{\mathcal{N}}(X)-\sqrt{-1}\mu^{\mathcal{N}}(Y)+R^{\mathcal{N}})]^{-\frac{1}{2}}\eta$$
$$=\int_{M_{0}} \frac{\eta}{\rm{Pf}[\frac{-\mu^{\mathcal{N}}(X)-\sqrt{-1}\mu^{\mathcal{N}}(Y)+R^{\mathcal{N}}}{2\pi}]}$$
\end{proof}
By Theorem 1.,we can get the localization formulas of Berline and Vergne(see [2] or [3]).
\begin{corollery}[N.Berline and M.Vergne]
Let $M$ be a smooth closed oriented manifold, $G$ be a compact Lie
group acting smoothly on $M$. For any $\eta\in H^{*}_{X}(M)$, the following identity
hold:
$$\int_{M}\eta=\int_{M_{0}} \frac{\eta}{\rm{Pf}[\frac{-\mu^{\mathcal{N}}(X)+R^{\mathcal{N}}}{2\pi}]}$$
\end{corollery}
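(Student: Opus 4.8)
The plan is to derive the Berline--Vergne localization formula as the degenerate case $Y=0$ of Theorem 1. First I would note that $0\in\mathfrak{g}$, so that $Y=0$ is an admissible choice; the associated Killing field is then $Y_{M}=0$, its metric dual is $Y^{'}=0$, the hypothesis $[X,Y]=0$ holds trivially, and the operator $d_{X+\sqrt{-1}Y}=d+i_{X_{M}}+\sqrt{-1}i_{Y_{M}}$ collapses to the ordinary Cartan differential $d_{X}=d+i_{X_{M}}$. Consequently $\Omega^{*}_{X+\sqrt{-1}Y}(M)$ becomes the space of $X_{M}$-invariant forms and $H^{*}_{X+\sqrt{-1}Y}(M)=H^{*}_{X}(M)$, just as observed after the definition of the complex; hence any $\eta\in H^{*}_{X}(M)$ is a legitimate input for Theorem 1.

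Next I would track the two objects on the right-hand side of Theorem 1 under this substitution. Since $Y_{M}=0$,
$$M_{0}={\rm{Zero}}(X_{M}-\sqrt{-1}Y_{M})=\{x\in M\mid |X_{M}(x)|=|Y_{M}(x)|=0\}=\{x\in M\mid X_{M}(x)=0\}={\rm{Zero}}(X_{M}),$$
the usual fixed-point set of the Killing field $X_{M}$, with normal bundle $\mathcal{N}$ as before. Moreover the moment endomorphism of $Y$ vanishes, $\mu^{\mathcal{N}}(Y)=\mu^{\mathcal{N}}(0)=0$, so the Pfaffian appearing in Theorem 1 simplifies to
$${\rm{Pf}}\left[\frac{-\mu^{\mathcal{N}}(X)-\sqrt{-1}\mu^{\mathcal{N}}(Y)+R^{\mathcal{N}}}{2\pi}\right]={\rm{Pf}}\left[\frac{-\mu^{\mathcal{N}}(X)+R^{\mathcal{N}}}{2\pi}\right],$$
and Theorem 1 then reads
$$\int_{M}\eta=\int_{M_{0}}\frac{\eta}{{\rm{Pf}}\left[\frac{-\mu^{\mathcal{N}}(X)+R^{\mathcal{N}}}{2\pi}\right]},$$
which is exactly the asserted identity.

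I do not expect a genuine obstacle here; the only point needing a word of care is that Theorem 1 was proved under the simplifying assumption that $M_{0}$ is a single connected submanifold, whereas ${\rm{Zero}}(X_{M})$ is in general a disjoint union of closed submanifolds of possibly varying dimension. The remedy is routine: apply the tubular-neighbourhood argument in the proof of Theorem 1 to each connected component $F$ of ${\rm{Zero}}(X_{M})$ separately, with $\mathcal{N}=\mathcal{N}_{F}$ its normal bundle, and sum the contributions --- this is precisely the form in which Berline and Vergne state their result. One should also confirm that the local model used there, namely the one-form $\alpha$ on $\mathcal{N}$ together with the attendant Gaussian integral, degenerates consistently when $\mu^{\mathcal{N}}(Y)=0$, but this is immediate since every term proportional to $\mu^{\mathcal{N}}(Y)$ simply drops out.
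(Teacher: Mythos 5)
Your proposal is correct and follows exactly the paper's route: the paper's own proof of this corollary is the one-line specialization $Y=0$ in Theorem 1, and you simply spell out the routine verifications (that $d_{X+\sqrt{-1}Y}$ collapses to $d_{X}$, that $M_{0}$ becomes ${\rm Zero}(X_{M})$, and that $\mu^{\mathcal{N}}(Y)=0$) together with the standard remark about summing over connected components.
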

\begin{proof}
By Theorem 1., we set $Y=0$, then we get the result.
\end{proof}

\section{Application in Characteristic Numbers}
As in [6], we will give the application of the localization formula about two Killing vector field in characteristic numbers. So let's recall the Chern-Weil theory(see [12]) about equivariant connection and equivariant curvature without proof(see [6] for proof) .

Let $M$ be an even dimensional compact oriented manifold without boundary, $G$ be a compact Lie group
acting smoothly on $M$ and $\mathfrak{g}$
be its Lie algebra. Let $g^{TM}$ be a $G$-invariant Riemannian metric on $TM$, $\nabla^{TM}$ is the Levi-Civita connection associated to $g^{TM}$. Here $\nabla^{TM}$ is a $G$-invariant connection, we see that $[\nabla^{TM},L_{X_{M}}]=0$ for all $X\in\mathfrak{g}$.\par

The equivariant connection $\widetilde{\nabla}^{TM}$ is the operator on $\Omega^{*}(M,TM)$ corresponding to a $G$-invariant connection $\nabla^{TM}$ is defined by the formula
$$\widetilde{\nabla}^{TM}=\nabla^{TM}+i_{X_{M}+\sqrt{-1}Y_{M}}$$
here $X_{M} ,Y_{M}$ be the smooth vector field on $M$ corresponded to $X,Y\in\mathfrak{g}$.
\begin{lemma}
The operator $\widetilde{\nabla}^{TM}$ preserves the space $\Omega^{*}_{X_{M}+\sqrt{-1}Y_{M}}(M,TM)$ which is the space of smooth $(X_{M}+\sqrt{-1}Y_{M})$-invariant forms with values in $TM$.
\end{lemma}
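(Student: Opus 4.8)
The plan is to prove the slightly stronger statement that $\widetilde{\nabla}^{TM}$ commutes with $L_{X_{M}+\sqrt{-1}Y_{M}}$ as operators on $\Omega^{*}(M,TM)$; the lemma then follows immediately, since if $L_{X_{M}+\sqrt{-1}Y_{M}}\omega=0$ then $L_{X_{M}+\sqrt{-1}Y_{M}}(\widetilde{\nabla}^{TM}\omega)=\widetilde{\nabla}^{TM}(L_{X_{M}+\sqrt{-1}Y_{M}}\omega)=0$, so $\widetilde{\nabla}^{TM}\omega$ is again $(X_{M}+\sqrt{-1}Y_{M})$-invariant.

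First I would write $\widetilde{\nabla}^{TM}=\nabla^{TM}+i_{X_{M}+\sqrt{-1}Y_{M}}$ and handle the two summands separately. For the connection term, recall that $\nabla^{TM}$ is the Levi-Civita connection of the $G$-invariant metric $g^{TM}$, hence $G$-invariant, so $[\nabla^{TM},L_{X_{M}}]=0$ and $[\nabla^{TM},L_{Y_{M}}]=0$; by $\mathbb{C}$-linearity $[\nabla^{TM},L_{X_{M}+\sqrt{-1}Y_{M}}]=0$. For the interior-multiplication term I would expand
\[
[L_{X_{M}+\sqrt{-1}Y_{M}},\,i_{X_{M}+\sqrt{-1}Y_{M}}]
=[L_{X_{M}},i_{X_{M}}]+\sqrt{-1}\,[L_{X_{M}},i_{Y_{M}}]+\sqrt{-1}\,[L_{Y_{M}},i_{X_{M}}]-[L_{Y_{M}},i_{Y_{M}}],
\]
and apply the Cartan identity $[L_{U},i_{V}]=i_{[U,V]}$, which continues to hold on $TM$-valued forms once $L_{U}$ is taken to be the Lie derivative induced by the natural lift of the flow of $U$ to $TM$ (equivalently $L_{U}=[\nabla^{TM},i_{U}]+\mu^{TM}(U)$ in the notation of Section 4). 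The two diagonal terms vanish because $[X_{M},X_{M}]=[Y_{M},Y_{M}]=0$, and the two cross terms cancel because $i_{[X_{M},Y_{M}]}+i_{[Y_{M},X_{M}]}=0$. Hence $[L_{X_{M}+\sqrt{-1}Y_{M}},i_{X_{M}+\sqrt{-1}Y_{M}}]=0$, and adding this to the connection term gives $[L_{X_{M}+\sqrt{-1}Y_{M}},\widetilde{\nabla}^{TM}]=0$. Note that no hypothesis on $[X,Y]$ is needed here, consistent with the statement of the lemma.

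The only genuinely delicate point — and the place I would be most careful — is the bundle-valued Cartan calculus: one must fix once and for all that $L_{X_{M}}$ on $\Omega^{*}(M,TM)$ acts simultaneously on the form part and, via the differential of the flow, on the $TM$ part, so that $i_{X_{M}}$ and $i_{Y_{M}}$ contract only the form part and the graded commutators $[L_{U},i_{V}]$ and $[\nabla^{TM},L_{U}]$ behave exactly as in the untwisted de Rham complex. A one-line check on a decomposable form $\alpha\otimes V$ confirms $[L_{U},i_{V}]\,(\alpha\otimes W)=i_{[U,V]}\alpha\otimes W$, so the ordinary identity carries over verbatim. Once this bookkeeping is settled, every step above is routine, and the substance of the lemma reduces to the $G$-invariance of $\nabla^{TM}$ together with the cancellation of the cross terms.
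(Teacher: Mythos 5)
Your argument is correct and complete. The paper itself gives no proof of this lemma (it defers to reference [6]), so there is nothing to compare against line by line; but your route is the standard one and the one implicitly intended: split $\widetilde{\nabla}^{TM}=\nabla^{TM}+i_{X_{M}+\sqrt{-1}Y_{M}}$, use the $G$-invariance of the Levi-Civita connection for the first summand, and use the Cartan identity $[L_{U},i_{V}]=i_{[U,V]}$ (which, as you verify on decomposable elements $\alpha\otimes W$, carries over verbatim to $TM$-valued forms) for the second, where the diagonal terms vanish and the cross terms cancel by antisymmetry of the bracket. Your observation that no hypothesis $[X,Y]=0$ is needed is also correct and consistent with the statement.
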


We will also denote the restriction of $\widetilde{\nabla}^{TM}$ to $\Omega^{*}_{X_{M}+\sqrt{-1}Y_{M}}(M,TM)$ by $\widetilde{\nabla}^{TM}$.

The equivariant curvature $\widetilde{R}^{TM}$ of the equivariant connection $\widetilde{\nabla}^{TM}$ is defined by the formula(see [2])
$$\widetilde{R}^{TM}=(\widetilde{\nabla}^{TM})^{2}-L_{X_{M}}-\sqrt{-1}L_{Y_{M}}$$
It is the element of $\Omega^{*}_{X_{M}+\sqrt{-1}Y_{M}}(M,End(TM))$. We see that
\begin{align*}
\widetilde{R}^{TM}
&=(\nabla^{TM}+i_{X_{M}+\sqrt{-1}Y_{M}})^{2}-L_{X_{M}}-\sqrt{-1}L_{Y_{M}}\\
&=R^{TM}+[\nabla^{TM},i_{X_{M}+\sqrt{-1}Y_{M}}]-L_{X_{M}}-\sqrt{-1}L_{Y_{M}}\\
&=R^{TM}-\mu^{TM}(X)-\sqrt{-1}\mu^{TM}(Y)
\end{align*}

\begin{lemma}
The equivariant curvature $\widetilde{R}^{TM}$ satisfies the equvariant Bianchi formula $$\widetilde{\nabla}^{TM}\widetilde{R}^{TM}=0$$
\end{lemma}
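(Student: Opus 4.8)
The plan is to exploit the graded-commutator structure of the operators involved, exactly as in the classical Bianchi identity, but carried out in the equivariant (Cartan) model. First I would recall that $\widetilde{\nabla}^{TM}=\nabla^{TM}+i_{X_M+\sqrt{-1}Y_M}$ acts on $\Omega^{*}_{X_M+\sqrt{-1}Y_M}(M,\mathrm{End}(TM))$, and that by Lemma 12 (the preceding lemma, numbered 11 in the source) it preserves this space; so the expression $\widetilde{\nabla}^{TM}\widetilde{R}^{TM}$ makes sense. I would then write $L\doteq L_{X_M}+\sqrt{-1}L_{Y_M}=(\widetilde{\nabla}^{TM})^{2}-\widetilde{R}^{TM}$, noting that on the invariant subspace $L$ acts as zero (that is the defining condition of $\Omega^{*}_{X_M+\sqrt{-1}Y_M}$), but more importantly that $L$ commutes with $\widetilde{\nabla}^{TM}$: indeed $L$ is the square of the odd operator $\widetilde{\nabla}^{TM}$, so $[\widetilde{\nabla}^{TM},L]=[\widetilde{\nabla}^{TM},(\widetilde{\nabla}^{TM})^{2}]=0$ by the graded Jacobi identity (an odd operator always commutes with its own square).

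The computation then proceeds in two lines. From the definition $\widetilde{R}^{TM}=(\widetilde{\nabla}^{TM})^{2}-L$ we get, acting on an element $\sigma\in\Omega^{*}_{X_M+\sqrt{-1}Y_M}(M,\mathrm{End}(TM))$ (thought of as acting on $\Omega^*(M,TM)$ by the commutator action, which is how curvature enters),
\[
\widetilde{\nabla}^{TM}\widetilde{R}^{TM}
=\widetilde{\nabla}^{TM}\bigl((\widetilde{\nabla}^{TM})^{2}-L\bigr)
=(\widetilde{\nabla}^{TM})^{3}-\widetilde{\nabla}^{TM}L,
\]
while
\[
\widetilde{R}^{TM}\widetilde{\nabla}^{TM}
=\bigl((\widetilde{\nabla}^{TM})^{2}-L\bigr)\widetilde{\nabla}^{TM}
=(\widetilde{\nabla}^{TM})^{3}-L\,\widetilde{\nabla}^{TM}.
\]
Subtracting and using $[\widetilde{\nabla}^{TM},L]=0$ shows that $\widetilde{R}^{TM}$ commutes with $\widetilde{\nabla}^{TM}$ as operators; equivalently, interpreting $\widetilde{\nabla}^{TM}\widetilde{R}^{TM}$ as the induced covariant derivative of the curvature (the graded commutator $[\widetilde{\nabla}^{TM},\widetilde{R}^{TM}]$ in the $\mathrm{End}$-bundle picture), this bracket vanishes, which is precisely the assertion $\widetilde{\nabla}^{TM}\widetilde{R}^{TM}=0$.

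The one point that needs care — and which I expect to be the main obstacle to a clean write-up — is the bookkeeping of the two different meanings of $\widetilde{\nabla}^{TM}\widetilde{R}^{TM}$: $\widetilde{R}^{TM}$ lives in $\Omega^{*}(M,\mathrm{End}(TM))$, so its covariant exterior derivative is taken with respect to the connection induced on $\mathrm{End}(TM)$, and one must check that this induced operation coincides with the graded commutator $[\widetilde{\nabla}^{TM},\,\cdot\,]$ on operators acting on $\Omega^{*}(M,TM)$. This is the standard identification, and once it is in place the identity is purely formal: it is the statement that an odd operator commutes with its square, together with the fact that the ``$L$'' part of that square is central. I would therefore devote a sentence to pinning down this identification (citing [2] for the conventions on equivariant connections and curvature), and then present the two-line commutator computation above as the proof.
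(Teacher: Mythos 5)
The paper states this lemma \emph{without} proof: at the start of Section 5 it says the Chern--Weil facts are recalled ``without proof (see [6] for proof)'', so there is no internal argument to compare against and your proposal must stand on its own. Your overall strategy is the standard one and is correct in outline: interpret $\widetilde{\nabla}^{TM}\widetilde{R}^{TM}$ as the graded commutator $[\widetilde{\nabla}^{TM},\widetilde{R}^{TM}]$, write $\widetilde{R}^{TM}=(\widetilde{\nabla}^{TM})^{2}-L$ with $L=L_{X_{M}}+\sqrt{-1}L_{Y_{M}}$, kill the $(\widetilde{\nabla}^{TM})^{3}$ terms by the graded Jacobi identity, and reduce everything to $[\widetilde{\nabla}^{TM},L]=0$.

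The genuine flaw is in your justification of that last, crucial step. You assert that ``$L$ is the square of the odd operator $\widetilde{\nabla}^{TM}$,'' but by the very definition you quote, $(\widetilde{\nabla}^{TM})^{2}=L+\widetilde{R}^{TM}$; the identity $L=(\widetilde{\nabla}^{TM})^{2}$ holds only when $\widetilde{R}^{TM}=0$, in which case there is nothing to prove, so the argument as written is circular. The claim $[\widetilde{\nabla}^{TM},L]=0$ is true but needs its own (short) proof: $[\nabla^{TM},L_{X_{M}}]=[\nabla^{TM},L_{Y_{M}}]=0$ because the Levi--Civita connection of the $G$-invariant metric is $G$-invariant (the paper records exactly this at the start of Section 5), and the Cartan relation $[L_{V},i_{W}]=i_{[V,W]}$ gives
$$[L_{X_{M}}+\sqrt{-1}L_{Y_{M}},\,i_{X_{M}}+\sqrt{-1}i_{Y_{M}}]=\sqrt{-1}\,i_{[X_{M},Y_{M}]}+\sqrt{-1}\,i_{[Y_{M},X_{M}]}=0,$$
the two cross terms cancelling (notably, no hypothesis $[X,Y]=0$ is needed). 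Alternatively, you could run the whole computation on $\Omega^{*}_{X_{M}+\sqrt{-1}Y_{M}}(M,TM)$, where $L$ acts as zero and which $\widetilde{\nabla}^{TM}$ preserves by the preceding lemma --- you mention this fact yourself but then set it aside in favour of the incorrect ``square'' argument. With either repair, the two-line commutator computation you present does complete the proof.
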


Now we construct the equivariant characteristic forms by $\widetilde{R}^{TM}$. If $f(x)$ is a polynomial in the indeterminate $x$, then $f(\widetilde{R}^{TM})$ is an element of $\Omega^{*}_{X_{M}+\sqrt{-1}Y_{M}}(M,End(TM))$. We use the trace map
$${\rm Tr}: \Omega^{*}_{X_{M}+\sqrt{-1}Y_{M}}(M,End(TM))\rightarrow\Omega^{*}_{X_{M}+\sqrt{-1}Y_{M}}(M)$$
to obtain an element of $\Omega^{*}_{X_{M}+\sqrt{-1}Y_{M}}(M)$, which we call an equivariant characteristic form.
\begin{lemma}
The equivariant differential form ${\rm Tr}(f(\widetilde{R}^{TM}))$ is $d_{X_{M}+\sqrt{-1}Y_{M}}$-closed, and its equivariant cohomology class is independent of the choice of the G-invariant connection $\nabla^{TM}$.
\end{lemma}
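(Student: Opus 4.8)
The plan is to adapt the classical Chern--Weil argument to the equivariant complex $(\Omega^{*}_{X_{M}+\sqrt{-1}Y_{M}}(M),d_{X+\sqrt{-1}Y})$. The key structural fact is that the trace intertwines the two differentials: for any $\Phi\in\Omega^{*}_{X_{M}+\sqrt{-1}Y_{M}}(M,{\rm End}(TM))$ one has ${\rm Tr}(\widetilde{\nabla}^{TM}\Phi)=d_{X+\sqrt{-1}Y}{\rm Tr}(\Phi)$. Indeed $\widetilde{\nabla}^{TM}=\nabla^{TM}+i_{X_{M}+\sqrt{-1}Y_{M}}$, and on ${\rm End}(TM)$-valued forms the part of $\nabla^{TM}$ acting through the connection one-form is a graded commutator, which dies under ${\rm Tr}$, while $d\,{\rm Tr}=\,{\rm Tr}\,\nabla^{TM}$ and ${\rm Tr}$ commutes with the pointwise operator $i_{X_{M}+\sqrt{-1}Y_{M}}$. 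Granting this, I would argue that ${\rm Tr}(f(\widetilde{R}^{TM}))$ first lies in $\Omega^{*}_{X_{M}+\sqrt{-1}Y_{M}}(M)$, since $\widetilde{R}^{TM}$ is $(X_{M}+\sqrt{-1}Y_{M})$-invariant by the remark that $\widetilde{R}^{TM}=R^{TM}-\mu^{TM}(X)-\sqrt{-1}\mu^{TM}(Y)$, and trace and polynomial operations preserve invariance.

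For closedness, write $f(\widetilde{R}^{TM})$ as a sum of monomials $(\widetilde{R}^{TM})^{k}$. Since $\nabla^{TM}$ and $i_{X_{M}+\sqrt{-1}Y_{M}}$ are odd derivations, $\widetilde{\nabla}^{TM}$ is an odd derivation for composition of endomorphism-valued forms, so the Leibniz rule gives $\widetilde{\nabla}^{TM}(\widetilde{R}^{TM})^{k}$ as a sum of terms each containing a factor $\widetilde{\nabla}^{TM}\widetilde{R}^{TM}$, which vanishes by the equivariant Bianchi identity (Lemma 10). Hence $\widetilde{\nabla}^{TM}f(\widetilde{R}^{TM})=0$, and applying the intertwining identity yields $d_{X+\sqrt{-1}Y}{\rm Tr}(f(\widetilde{R}^{TM}))={\rm Tr}(\widetilde{\nabla}^{TM}f(\widetilde{R}^{TM}))=0$.

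For independence of the connection, take two $G$-invariant connections $\nabla^{TM}_{0},\nabla^{TM}_{1}$ and interpolate by $\nabla^{TM}_{u}=\nabla^{TM}_{0}+u\theta$, $\theta=\nabla^{TM}_{1}-\nabla^{TM}_{0}$, which is a $G$-invariant element of $\Omega^{1}(M,{\rm End}(TM))$; each $\nabla^{TM}_{u}$ is $G$-invariant and produces an equivariant connection $\widetilde{\nabla}^{TM}_{u}$ with equivariant curvature $\widetilde{R}^{TM}_{u}$. From $\widetilde{R}^{TM}_{u}=(\widetilde{\nabla}^{TM}_{u})^{2}-L_{X_{M}+\sqrt{-1}Y_{M}}$ and $\tfrac{d}{du}\widetilde{\nabla}^{TM}_{u}=\theta$ one gets $\tfrac{d}{du}\widetilde{R}^{TM}_{u}=\widetilde{\nabla}^{TM}_{u}\theta$, so $\tfrac{d}{du}{\rm Tr}(f(\widetilde{R}^{TM}_{u}))={\rm Tr}(f'(\widetilde{R}^{TM}_{u})\widetilde{\nabla}^{TM}_{u}\theta)$; using the Bianchi identity to move $\widetilde{\nabla}^{TM}_{u}$ off $f'(\widetilde{R}^{TM}_{u})$ and then the intertwining identity, this equals $d_{X+\sqrt{-1}Y}{\rm Tr}(f'(\widetilde{R}^{TM}_{u})\theta)$. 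Integrating over $u\in[0,1]$ gives ${\rm Tr}(f(\widetilde{R}^{TM}_{1}))-{\rm Tr}(f(\widetilde{R}^{TM}_{0}))=d_{X+\sqrt{-1}Y}\bigl(\int_{0}^{1}{\rm Tr}(f'(\widetilde{R}^{TM}_{u})\theta)\,du\bigr)$, and the transgression form is $(X_{M}+\sqrt{-1}Y_{M})$-invariant because it is built from $G$-invariant data; hence the two representatives define the same class in $H^{*}_{X+\sqrt{-1}Y}(M)$.

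The main obstacle is not conceptual but a matter of verifying that the three computational ingredients hold verbatim for the inhomogeneous objects at hand: $\widetilde{\nabla}^{TM}$ and $\widetilde{R}^{TM}$ mix a $2$-form component $R^{TM}$ with the $0$-form components $-\mu^{TM}(X)-\sqrt{-1}\mu^{TM}(Y)$, so one must check that ${\rm Tr}$ still annihilates the relevant graded commutator, that $\widetilde{\nabla}^{TM}$ is genuinely a derivation on ${\rm End}(TM)$-valued equivariant forms (this is Lemma 9 together with the formula $\widetilde{\nabla}^{TM}=\nabla^{TM}+i_{X_{M}+\sqrt{-1}Y_{M}}$), and that the Bianchi identity $\widetilde{\nabla}^{TM}\widetilde{R}^{TM}=0$ is available (Lemma 10). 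A secondary bookkeeping point is tracking the factors of $\sqrt{-1}$ and confirming $G$-invariance at each step, but the latter is automatic. Since $f$ is a polynomial, no convergence questions arise.
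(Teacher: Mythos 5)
Your argument is correct, but there is nothing in this paper to compare it against: the author explicitly states Lemmas 9--11 ``without proof,'' deferring to reference [6], so Lemma 11 carries no proof in the present text. What you have written is precisely the standard equivariant Chern--Weil argument that the deferred proof would consist of --- the intertwining identity ${\rm Tr}(\widetilde{\nabla}^{TM}\Phi)=d_{X+\sqrt{-1}Y}{\rm Tr}(\Phi)$ (trace kills the graded commutator with the connection form, and ${\rm Tr}$ commutes with the pointwise contraction $i_{X_{M}+\sqrt{-1}Y_{M}}$), closedness via the Leibniz rule and the equivariant Bianchi identity of Lemma 10, and independence of the connection via transgression along the affine path $\nabla^{TM}_{u}=\nabla^{TM}_{0}+u\theta$. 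All three ingredients do hold verbatim here because $\widetilde{R}^{TM}=R^{TM}-\mu^{TM}(X)-\sqrt{-1}\mu^{TM}(Y)$ is an \emph{even} element of $\Omega^{*}_{X_{M}+\sqrt{-1}Y_{M}}(M,{\rm End}(TM))$ (one can check directly that $L_{X_{M}}+\sqrt{-1}L_{Y_{M}}$ annihilates it even without $[X,Y]=0$, since the cross terms $\mu^{TM}([X,Y])$ and $\mu^{TM}([Y,X])$ cancel), so your proposal is a complete and valid proof that fills a gap the paper leaves open.
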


As an application of Theorem 1., we can get the following localization formulas for characteristic numbers
\begin{theorem}
Let $M$ be an $2m$-dim compact oriented manifold without boundary, $G$ be a compact Lie group
acting smoothly on $M$ and $\mathfrak{g}$ be its Lie algebra. Let $X,Y\in\mathfrak{g}$, and $X_{M} ,Y_{M}$ be the corresponding
smooth vector field on $M$, $M_{0}={\rm{Zero}}(X_{M}-\sqrt{-1}Y_{M})$. If $f(x)$ is a polynomial, then we have
$$\int_{M}{\rm Tr}(f(\widetilde{R}^{TM}))=\int_{M_{0}}\frac{{\rm Tr}(f(\widetilde{R}^{TM}))}{\rm{Pf}[\frac{-\mu^{\mathcal{N}}(X)-\sqrt{-1}\mu^{\mathcal{N}}(Y)+R^{\mathcal{N}}}{2\pi}]}$$
\end{theorem}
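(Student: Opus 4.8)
The plan is to derive Theorem 2 as an immediate consequence of Theorem 1, with the equivariantly closed form in Theorem 1 taken to be the equivariant characteristic form ${\rm Tr}(f(\widetilde{R}^{TM}))$ itself. The first step is to observe, by Lemma 11, that $\eta := {\rm Tr}(f(\widetilde{R}^{TM}))$ lies in $\Omega^*_{X_M+\sqrt{-1}Y_M}(M)$ and is $d_{X+\sqrt{-1}Y}$-closed; hence it represents a class in $H^*_{X+\sqrt{-1}Y}(M)$, which is exactly the input Theorem 1 needs. Since Theorem 1 is stated under the assumption $[X,Y]=0$ — that commutation being what makes $d_{X+\sqrt{-1}Y}(X'-\sqrt{-1}Y')$ itself $d_{X+\sqrt{-1}Y}$-closed (Lemma 6) and what yields $[\mu^{TM}(X),\mu^{TM}(Y)]=0$ in the Gaussian-integration step of its proof — the same hypothesis is in force here.

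With this in place, the second step is simply to apply Theorem 1 to $\eta = {\rm Tr}(f(\widetilde{R}^{TM}))$, which gives
$$\int_M {\rm Tr}(f(\widetilde{R}^{TM})) = \int_{M_0}\frac{{\rm Tr}(f(\widetilde{R}^{TM}))}{{\rm Pf}\left[\frac{-\mu^{\mathcal{N}}(X)-\sqrt{-1}\mu^{\mathcal{N}}(Y)+R^{\mathcal{N}}}{2\pi}\right]},$$
precisely the assertion of the theorem. The one point I would take care to spell out is the meaning of the right-hand side: the numerator is to be read as $i^*{\rm Tr}(f(\widetilde{R}^{TM}))$, the restriction along $i\colon M_0\hookrightarrow M$, and the reciprocal Pfaffian is expanded as a formal power series in the normal curvature $R^{\mathcal{N}}$ — legitimate because $-\mu^{\mathcal{N}}(X)-\sqrt{-1}\mu^{\mathcal{N}}(Y)$ is an invertible skew-symmetric endomorphism of $\mathcal{N}$, exactly as exploited in the proof of Theorem 1 — after which only the component of the integrand of top degree on $M_0$ survives the integration.

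There is essentially no obstacle beyond this bookkeeping: the substantive facts — that the equivariant Chern--Weil forms are genuine equivariant cohomology classes, and that integrals of such classes localize to $M_0$ — are already Lemma 11 and Theorem 1 respectively. If one preferred a self-contained proof, one could instead rerun the rescaling argument in the proof of Theorem 1 with $\eta$ replaced throughout by ${\rm Tr}(f(\widetilde{R}^{TM}))$, but this only repeats work already done. It is perhaps worth adding a remark that specializing $Y=0$ recovers the classical localization formula for characteristic numbers over the zero set of a single Killing vector field.
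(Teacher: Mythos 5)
Your proof follows exactly the paper's own argument: invoke the lemma asserting that ${\rm Tr}(f(\widetilde{R}^{TM}))$ is $d_{X+\sqrt{-1}Y}$-closed and then apply Theorem 1 (and you correctly note that the hypothesis $[X,Y]=0$ from Theorem 1 must be carried over, which the theorem statement omits). Your added remarks on interpreting the right-hand side are sensible but not needed beyond what the paper does.
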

\begin{proof}
By Lemma 10., we have ${\rm Tr}(f(\widetilde{R}^{TM}))$ is $d_{X_{M}+\sqrt{-1}Y_{M}}$-closed. And by Theorem 1., we get the result.
\end{proof}

Now we use the detaminate map
$${\rm det}: \Omega^{*}_{X_{M}+\sqrt{-1}Y_{M}}(M,End(TM))\rightarrow\Omega^{*}_{X_{M}+\sqrt{-1}Y_{M}}(M)$$
to obtain an element of $\Omega^{*}_{X_{M}+\sqrt{-1}Y_{M}}(M)$.

\begin{lemma}
The equivariant differential form ${\rm Pf}(-\widetilde{R}^{TM})$ is $d_{X_{M}+\sqrt{-1}Y_{M}}$-closed, and its equivariant cohomology class is independent of the choice of the G-invariant connection $\nabla^{TM}$.
\end{lemma}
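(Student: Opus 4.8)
The plan is to run the equivariant Chern--Weil argument exactly as in Lemma 10 and Lemma 13, but with the trace functional replaced by the Pfaffian, which is an invariant polynomial on the Lie algebra of skew-symmetric endomorphisms. First I would check that $\mathrm{Pf}(-\widetilde{R}^{TM})$ is well defined. Since $\nabla^{TM}$ is the Levi--Civita connection of a $G$-invariant metric, $R^{TM}$ takes values in skew-symmetric endomorphisms of $TM$; and, as recalled before Theorem 1, $\mu^{TM}(X)$ and $\mu^{TM}(Y)$ are skew-symmetric endomorphisms of $TM$ because $X_{M},Y_{M}$ are Killing. Hence $\widetilde{R}^{TM}=R^{TM}-\mu^{TM}(X)-\sqrt{-1}\,\mu^{TM}(Y)$ is a form with values in skew-symmetric endomorphisms of $TM$, so, once the orientation of $M$ is fixed, $\mathrm{Pf}(-\widetilde{R}^{TM})$ is a well-defined element of $\Omega^{*}_{X_{M}+\sqrt{-1}Y_{M}}(M)$.

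Next, the $d_{X+\sqrt{-1}Y}$-closedness. This is the standard transgression computation, now in the Cartan complex: for any invariant polynomial $P$ on the skew-symmetric endomorphisms of $TM$, one shows on $\Omega^{*}_{X_{M}+\sqrt{-1}Y_{M}}(M,\mathrm{End}(TM))$ that $d_{X+\sqrt{-1}Y}\,P(\widetilde{R}^{TM})$ equals the derivative of $P$ at $\widetilde{R}^{TM}$ evaluated on $\widetilde{\nabla}^{TM}\widetilde{R}^{TM}$. Here one uses that $\widetilde{\nabla}^{TM}$ satisfies the Leibniz rule on invariant $\mathrm{End}(TM)$-valued forms, that $\mathrm{Ad}$-invariance of $P$ makes the pure-connection contributions cancel, and that $d_{X+\sqrt{-1}Y}$ coincides with $\widetilde{\nabla}^{TM}$ on the scalar form $P(\widetilde{R}^{TM})$. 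By the equivariant Bianchi identity of Lemma 9, $\widetilde{\nabla}^{TM}\widetilde{R}^{TM}=0$, so $d_{X+\sqrt{-1}Y}\,P(\widetilde{R}^{TM})=0$; taking $P=\mathrm{Pf}$ applied to $-\widetilde{R}^{TM}$ gives the first assertion. As a consistency check one may note $\mathrm{Pf}(-\widetilde{R}^{TM})^{2}=\det(-\widetilde{R}^{TM})=\det(\widetilde{R}^{TM})$ since $\dim M$ is even, and the right-hand side is a polynomial in the $\mathrm{Tr}\big((\widetilde{R}^{TM})^{k}\big)$ by Newton's identities, hence $d_{X+\sqrt{-1}Y}$-closed by Lemma 10 — though this squared relation does not by itself yield closedness of the square root, which is why the direct argument is needed.

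For independence of the connection, given two $G$-invariant metrics $g_{0},g_{1}$ the convex combination $g_{t}=(1-t)g_{0}+tg_{1}$ is again $G$-invariant for $t\in[0,1]$; let $\nabla_{t}$ be its Levi--Civita connection and $\widetilde{R}_{t}$ the associated equivariant curvature, so that $\dot{\nabla}_{t}:=\tfrac{d}{dt}\nabla_{t}$ is a $G$-invariant $\mathrm{End}(TM)$-valued $1$-form. The usual transgression formula, carried over verbatim to the complex $(\Omega^{*}_{X_{M}+\sqrt{-1}Y_{M}}(M),d_{X+\sqrt{-1}Y})$, gives $\tfrac{d}{dt}\,\mathrm{Pf}(-\widetilde{R}_{t})=d_{X+\sqrt{-1}Y}\,\beta_{t}$ for an $(X_{M}+\sqrt{-1}Y_{M})$-invariant form $\beta_{t}$ built polynomially from $\dot{\nabla}_{t}$ and $\widetilde{R}_{t}$ (again using invariance of $\mathrm{Pf}$ and Lemma 9). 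Integrating over $t\in[0,1]$ exhibits $\mathrm{Pf}(-\widetilde{R}_{1})-\mathrm{Pf}(-\widetilde{R}_{0})$ as $d_{X+\sqrt{-1}Y}\big(\int_{0}^{1}\beta_{t}\,dt\big)$, so the equivariant cohomology class is unchanged.

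The main obstacle I anticipate is bookkeeping rather than conceptual: unlike the trace, the Pfaffian is only defined for skew-symmetric endomorphisms, so one must stay within metric-compatible connections throughout — this is precisely why the transgression is run through a convex family of invariant metrics rather than a straight-line homotopy between arbitrary invariant connections — and one must verify that the transgression identity in the Cartan model genuinely produces an $(X_{M}+\sqrt{-1}Y_{M})$-invariant primitive. All of this follows from Lemma 9 and the $\mathrm{Ad}$-invariance of $\mathrm{Pf}$, but it has to be assembled with some care.
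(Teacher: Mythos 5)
Your proof is correct, but it takes a genuinely different — and more robust — route than the paper's. The paper proves this lemma by reduction to the trace case: it writes $\det(-\widetilde{R}^{TM})=\exp({\rm Tr}(\log(-\widetilde{R}^{TM})))$, invokes the closedness of ${\rm Tr}(f(\widetilde{R}^{TM}))$ (Lemma 11 in the paper's numbering; your references to ``Lemma 9'' and ``Lemma 10'' are off by one), notes $\det(-\widetilde{R}^{TM})=({\rm Pf}(-\widetilde{R}^{TM}))^{2}$, and concludes. You instead run the equivariant Chern--Weil transgression directly for the invariant polynomial ${\rm Pf}$ on skew-symmetric endomorphisms, using the equivariant Bianchi identity, and you handle independence of the connection by transgressing through the Levi--Civita connections of a convex family of $G$-invariant metrics. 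Your parenthetical remark identifies exactly what the direct argument buys: closedness of $({\rm Pf}(-\widetilde{R}^{TM}))^{2}$ only yields
$${\rm Pf}(-\widetilde{R}^{TM})\cdot d_{X+\sqrt{-1}Y}\,{\rm Pf}(-\widetilde{R}^{TM})=0,$$
which does not force $d_{X+\sqrt{-1}Y}\,{\rm Pf}(-\widetilde{R}^{TM})=0$ unless the degree-zero component ${\rm Pf}(\mu^{TM}(X)+\sqrt{-1}\mu^{TM}(Y))$ is invertible — and it is not in general (it vanishes on $M_{0}$, for instance). So the paper's squaring argument leaves a gap that your direct invariant-polynomial argument closes; likewise, your insistence that ${\rm Pf}$ only makes sense for skew-symmetric endomorphisms, hence that the homotopy of connections must stay metric-compatible, addresses a point the paper passes over silently.
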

\begin{proof}
Because $\det A=\exp({\rm{Tr}}(\log(A)))$, so $$\det (-\widetilde{R}^{TM})=\exp({\rm{Tr}}(\log(-\widetilde{R}^{TM}))).$$
and we know that $\det (-\widetilde{R}^{TM})=({\rm Pf}(-\widetilde{R}^{TM}))^{2}$, by Lemma 11., we get the result.
\end{proof}

\begin{theorem}
Let $M$ be an $2m$-dim compact oriented manifold without boundary, $G$ be a compact Lie group
acting smoothly on $M$ and $\mathfrak{g}$ be its Lie algebra. Let $X,Y\in\mathfrak{g}$, and $X_{M} ,Y_{M}$ be the corresponding
smooth vector field on $M$, $M_{0}={\rm{Zero}}(X_{M}-\sqrt{-1}Y_{M})$. Then we have
$$\int_{M}{\rm Pf}(-\widetilde{R}^{TM})=\int_{M_{0}}\frac{{\rm Pf}(-\widetilde{R}^{TM})}{\rm{Pf}[\frac{-\mu^{\mathcal{N}}(X)-\sqrt{-1}\mu^{\mathcal{N}}(Y)+R^{\mathcal{N}}}{2\pi}]}$$
\end{theorem}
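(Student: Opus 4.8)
The proof of Theorem 3 follows the same pattern as Theorem 2: the Pfaffian form $\mathrm{Pf}(-\widetilde{R}^{TM})$ is an equivariant characteristic form built polynomially (via the Pfaffian) from the equivariant curvature $\widetilde{R}^{TM}$, so by Lemma 12 it is $d_{X+\sqrt{-1}Y}$-closed and hence represents a class in $H^{*}_{X+\sqrt{-1}Y}(M)$. Therefore I can simply feed $\eta = \mathrm{Pf}(-\widetilde{R}^{TM})$ into Theorem 1 and read off the stated identity. Let me record this more carefully.

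\begin{proof}
By Lemma 12, the equivariant differential form $\mathrm{Pf}(-\widetilde{R}^{TM})$ is $d_{X_{M}+\sqrt{-1}Y_{M}}$-closed, hence defines a class in $H^{*}_{X+\sqrt{-1}Y}(M)$. Since $[X,Y]=0$ by hypothesis, Theorem 1 applies with $\eta=\mathrm{Pf}(-\widetilde{R}^{TM})$, and we obtain immediately
$$\int_{M}\mathrm{Pf}(-\widetilde{R}^{TM})=\int_{M_{0}}\frac{\mathrm{Pf}(-\widetilde{R}^{TM})}{\mathrm{Pf}[\frac{-\mu^{\mathcal{N}}(X)-\sqrt{-1}\mu^{\mathcal{N}}(Y)+R^{\mathcal{N}}}{2\pi}]}.$$
\end{proof}

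The only subtlety worth flagging — and the reason the statement is recorded as a separate theorem rather than as a one-line corollary of Theorem 2 — is that $\mathrm{Pf}$ is not literally of the form $\mathrm{Tr}(f(\,\cdot\,))$ for a polynomial $f$; it is a polynomial in the matrix entries but defined only on skew-symmetric matrices, and it is its square, $\det(-\widetilde{R}^{TM})=\mathrm{Pf}(-\widetilde{R}^{TM})^{2}$, that is visibly an invariant polynomial expressible through the trace. Lemma 12 is exactly what bridges this gap: it records that $\mathrm{Pf}(-\widetilde{R}^{TM})$ is $d_{X+\sqrt{-1}Y}$-closed with a well-defined cohomology class, using the relation $\det A=\exp(\mathrm{Tr}(\log A))$ together with Lemma 11. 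Once Lemma 12 is in hand there is no remaining obstacle — the heavy lifting (the rescaling $y\mapsto\sqrt{t}\,y$, the Gaussian integration over the fibres of $\mathcal{N}$, and the identification of the leading term with the Pfaffian of the equivariant normal curvature) was already carried out in the proof of Theorem 1, and nothing about it is sensitive to which particular $d_{X+\sqrt{-1}Y}$-closed form plays the role of $\eta$. So the proof is genuinely just an invocation of Lemma 12 followed by Theorem 1, exactly parallel to how Theorem 2 was deduced.
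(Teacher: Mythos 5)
Your proof is correct and is essentially identical to the paper's: both invoke Lemma 12 to establish that ${\rm Pf}(-\widetilde{R}^{TM})$ is $d_{X+\sqrt{-1}Y}$-closed and then apply Theorem 1 with $\eta={\rm Pf}(-\widetilde{R}^{TM})$. Your parenthetical that $[X,Y]=0$ holds ``by hypothesis'' is not literally in the statement of Theorem 3 (the paper omits it there too, even though Theorem 1 requires it), but this is a defect of the paper's statement rather than of your argument.
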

\begin{proof}
Because ${\rm Pf}(-\widetilde{R}^{TM})$ is $d_{X_{M}+\sqrt{-1}Y_{M}}$-closed and by Theorem 1., we get the result.
\end{proof}

\section{Application in Symplectic Manifolds}
Let $(M,\omega)$ be a smooth closed symplectic manifold, $\omega$ is a closed nondegenerate 2-form with $d\omega=0$(see [4]). Let $G$ be a connected compact Lie group acting on $M$ via symplectomorphism, i.e. $$L_{X}\omega=0$$ for $\forall X\in\mathfrak{g}$, here $\mathfrak{g}$ be its Lie algebra. If $X,Y\in\mathfrak{g}$, let $X_{M} ,Y_{M}$ be the corresponding
smooth vector field on $M$ given by
$$(X_{M}f)(x)=\frac{d}{dt}f(\exp(-tX)\cdot x)\mid_{t=0}.$$

By the symplectic form $\omega$ there is a isomorphism between vector fields and 1-form on $M$, i.e.
$$\Gamma(TM)\rightarrow\Omega^{1}(M): X_{M}\mapsto i_{X_{M}}\omega$$

For $H\in C^{\infty}(M)$, then a vector field $X^{H}$ on $M$ is called a Hamiltonian vector field with the energy function $H$, if for $X^{H}$ we have $i_{X^{H}}\omega=dH$.

We can also define the equivariant cohomology associated with $X+\sqrt{-1}Y$ on symplectic manifold in the same way as in Section 1.

Here we define the equivariant extension of the symplectic form by
$$\omega-H_{X}-\sqrt{-1}H_{Y}$$
where $dH_{X}=i_{X_{M}}\omega$, $dH_{Y}=i_{Y_{M}}\omega$.

\begin{lemma}
The equivariant symplectic form $\omega-H_{X}-\sqrt{-1}H_{Y}$ is a $d_{X+\sqrt{-1}Y}$-closed form.
\end{lemma}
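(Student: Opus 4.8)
The plan is to verify directly that $d_{X+\sqrt{-1}Y}(\omega-H_{X}-\sqrt{-1}H_{Y})=0$, using the same pattern as the proofs of Lemmas 3 and 6, namely by computing separately the terms of each form-degree and invoking the defining relations $dH_{X}=i_{X_{M}}\omega$, $dH_{Y}=i_{Y_{M}}\omega$. First I would expand
$$d_{X+\sqrt{-1}Y}(\omega-H_{X}-\sqrt{-1}H_{Y})=d\omega+i_{X_{M}+\sqrt{-1}Y_{M}}\omega-dH_{X}-\sqrt{-1}dH_{Y}-i_{X_{M}+\sqrt{-1}Y_{M}}(H_{X}+\sqrt{-1}H_{Y}).$$
Since $H_{X},H_{Y}$ are functions, the interior product annihilates them, and $d\omega=0$ because $\omega$ is symplectic. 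The remaining terms are $i_{X_{M}}\omega+\sqrt{-1}\,i_{Y_{M}}\omega-dH_{X}-\sqrt{-1}\,dH_{Y}$, which vanishes by the two defining relations for $H_{X}$ and $H_{Y}$. Hence the form is $d_{X+\sqrt{-1}Y}$-closed.

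One should however also check that $\omega-H_{X}-\sqrt{-1}H_{Y}$ actually lies in the space $\Omega^{*}_{X_{M}+\sqrt{-1}Y_{M}}(M)\otimes_{\mathbb{R}}\mathbb{C}$, i.e. that $L_{X_{M}+\sqrt{-1}Y_{M}}$ kills it; this follows from $L_{X_{M}}\omega=L_{Y_{M}}\omega=0$ (the symplectomorphism hypothesis) together with $L_{X_{M}}H_{X}=i_{X_{M}}dH_{X}=i_{X_{M}}i_{X_{M}}\omega=0$, and similarly $L_{X_{M}}H_{Y}=i_{X_{M}}i_{Y_{M}}\omega$, $L_{Y_{M}}H_{X}=i_{Y_{M}}i_{X_{M}}\omega=-i_{X_{M}}i_{Y_{M}}\omega$, $L_{Y_{M}}H_{Y}=0$; adding the appropriate combination shows $L_{X_{M}+\sqrt{-1}Y_{M}}(H_{X}+\sqrt{-1}H_{Y})=0$. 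Alternatively, invariance is automatic once closedness is known, since $d_{X+\sqrt{-1}Y}^{2}=L_{X_{M}+\sqrt{-1}Y_{M}}$.

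I do not expect any real obstacle here: the computation is a two-line exercise in Cartan calculus, entirely parallel to the earlier lemmas in Section 2, and the only mild subtlety is bookkeeping the $\sqrt{-1}$ coefficients and confirming the invariance condition. The proof is essentially a direct substitution.
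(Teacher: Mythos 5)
Your proposal is correct and coincides with the paper's own proof, which is exactly the direct expansion of $d_{X+\sqrt{-1}Y}(\omega-H_{X}-\sqrt{-1}H_{Y})$ using $d\omega=0$, $dH_{X}=i_{X_{M}}\omega$, $dH_{Y}=i_{Y_{M}}\omega$, and the vanishing of interior products on functions. Your additional verification of the invariance condition $L_{X_{M}+\sqrt{-1}Y_{M}}(\omega-H_{X}-\sqrt{-1}H_{Y})=0$ is a sound (and welcome) extra check that the paper omits, but it does not change the argument.
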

\begin{proof}
\begin{align*}
d_{X+\sqrt{-1}Y}(\omega-H_{X}-\sqrt{-1}H_{Y})
&=(d+i_{X_{M}}+\sqrt{-1}i_{Y_{M}})(\omega-H_{X}-\sqrt{-1}H_{Y})\\
&=d\omega-dH_{X}-\sqrt{-1}dH_{Y}+i_{X_{M}}\omega+\sqrt{-1}i_{Y_{M}}\omega\\
&=d\omega\\
&=0
\end{align*}
\end{proof}

Since $d(H_{X}+\sqrt{-1}H_{Y})=i_{X_{M}}\omega+\sqrt{-1}i_{Y_{M}}\omega$, the set of points where the one-form $d(H_{X}+\sqrt{-1}H_{Y})$ vanishes coincides with the zero set ${\rm{Zero}}(X_{M}-\sqrt{-1}Y_{M})$.

\begin{theorem}
Let $(M,\omega)$ be a compact symplectic manifold, and let $G$  be a connected compact Lie group acting on M and $\mathfrak{g}$ be its Lie algebra. Also assume $M$ be a Riemannian manifold with $G$-invariant Riemannian metric $g^{TM}$. Let $X,Y\in\mathfrak{g}$, and $X_{M} ,Y_{M}$ be the corresponding smooth vector field on $M$, $M_{0}={\rm{Zero}}(X_{M}-\sqrt{-1}Y_{M})$. Then we have
$$\int_{M}\exp(-H_{X}-\sqrt{-1}H_{Y})\frac{\omega^{n}}{n!}=\int_{M_{0}}\frac{\exp(\omega)}{\rm{Pf}[\frac{-\mu^{\mathcal{N}}(X)-\sqrt{-1}\mu^{\mathcal{N}}(Y)+R^{\mathcal{N}}}{2\pi}]}$$
\end{theorem}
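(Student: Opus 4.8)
The plan is to recognize the left-hand side as an instance of the localization formula already established in Theorem~1, applied to a specific $d_{X+\sqrt{-1}Y}$-closed form on the symplectic manifold. First I would take $\eta = \exp(\omega - H_X - \sqrt{-1}H_Y)$. By Lemma~16 the equivariant symplectic form $\omega - H_X - \sqrt{-1}H_Y$ is $d_{X+\sqrt{-1}Y}$-closed, hence so is its exponential, so $\eta \in H^*_{X+\sqrt{-1}Y}(M)$ and Theorem~1 applies (note $[X,Y]=0$ must be assumed here, as in Theorem~1; I would add that hypothesis to the statement or note that $G$ being a torus suffices). The key algebraic observation is that, since $H_X$ and $H_Y$ are functions (degree-zero), we may split
$$\exp(\omega - H_X - \sqrt{-1}H_Y) = \exp(-H_X - \sqrt{-1}H_Y)\exp(\omega),$$
and on the left side of the localization identity only the top-degree ($2n$, where $\dim M = 2n$) component survives integration; this picks out $\exp(-H_X-\sqrt{-1}H_Y)\,\omega^n/n!$, giving exactly the integrand on the left-hand side of the theorem.

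Next I would apply Theorem~1 with this $\eta$:
$$\int_M \exp(-H_X-\sqrt{-1}H_Y)\frac{\omega^n}{n!} = \int_M \eta = \int_{M_0}\frac{\exp(\omega - H_X - \sqrt{-1}H_Y)}{\mathrm{Pf}\left[\frac{-\mu^{\mathcal{N}}(X)-\sqrt{-1}\mu^{\mathcal{N}}(Y)+R^{\mathcal{N}}}{2\pi}\right]}.$$
The remaining step is to simplify the restriction of $\eta$ to $M_0$. On $M_0 = {\rm Zero}(X_M - \sqrt{-1}Y_M) = \{|X_M| = |Y_M| = 0\}$ we have $X_M = Y_M = 0$, so $dH_X = i_{X_M}\omega = 0$ and $dH_Y = i_{Y_M}\omega = 0$ along $M_0$. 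Thus $H_X$ and $H_Y$ are locally constant on (the connected submanifold) $M_0$; the standard normalization is to choose the additive constants in the moment maps so that $H_X = H_Y = 0$ on $M_0$, whence $\exp(-H_X - \sqrt{-1}H_Y)|_{M_0} = 1$ and $\eta|_{M_0} = \exp(\omega)|_{M_0}$. This yields the claimed formula.

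The main obstacle — really the only subtlety — is this normalization of the Hamiltonians on $M_0$: the formula as stated implicitly assumes $H_X, H_Y$ vanish there, which requires that $M_0$ be connected (already assumed in Section~5) and that one fixes the ambiguous constant in $dH_X = i_{X_M}\omega$ accordingly. I would state this normalization explicitly at the start of the proof. A secondary point worth a remark is consistency of the hypotheses with Theorem~1: the commutativity $[X,Y]=0$ is needed for the $d_{X+\sqrt{-1}Y}$-closedness of $X' - \sqrt{-1}Y'$ used in Theorem~1's proof, so it should appear among the assumptions of Theorem~6 as well. Everything else is the purely formal manipulation of pulling the degree-zero factor out of the exponential and invoking Theorem~1, so no hard analysis is involved.
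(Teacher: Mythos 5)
Your proposal follows essentially the same route as the paper's proof: take $\eta=\exp(\omega-H_{X}-\sqrt{-1}H_{Y})$, which is $d_{X+\sqrt{-1}Y}$-closed by the lemma on the equivariant symplectic form, split off the degree-zero factor so the integral over $M$ picks out $\exp(-H_{X}-\sqrt{-1}H_{Y})\,\omega^{n}/n!$, apply Theorem~1, and use that $\exp(-H_{X}-\sqrt{-1}H_{Y})=1$ on $M_{0}$. Your two caveats --- that the Hamiltonians must be normalized to vanish on the connected set $M_{0}$, and that the hypothesis $[X,Y]=0$ inherited from Theorem~1 should appear in the statement --- are genuine points that the paper's one-line proof passes over silently.
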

\begin{proof}
By Lemma 13., $\omega-H_{X}-\sqrt{-1}H_{Y}$ is a $d_{X+\sqrt{-1}Y}$-closed form; and
$$\int_{M}\exp(\omega-H_{X}-\sqrt{-1}H_{Y})=\int_{M}\exp(-H_{X}-\sqrt{-1}H_{Y})\exp(\omega)=\int_{M}\exp(-H_{X}-\sqrt{-1}H_{Y})\frac{\omega^{n}}{n!}.$$
Note that $\exp(-H_{X}-\sqrt{-1}H_{Y})=1$ on $M_{0}$.
Then by Theorem 1., we get the result.
\end{proof}
Obviously, this is a Duistermaat-Heckman type formula.

\end{CJK}
\end{document}